\journal{arXiv}
\begin{document}

\newtheorem{theorem}{Theorem}
\newtheorem{lemma}{Lemma}
\newtheorem{proposition}{Proposition}
\newtheorem{corollary}{Corollary}
\newtheorem{definition}{Definition}
\newtheorem{remark}{Remark}

\def\fprod#1{\left\langle#1\right\rangle}

\begin{frontmatter}

\title{\LARGE \bf
Non-Concave Network Utility Maximization in Connectionless Networks: A Fully Distributed Traffic Allocation Algorithm\tnoteref{acknowledgement}
}

\author{Jingyao Wang\corref{cor1}\fnref{address_wang} }
\author{Mahmoud Ashour\fnref{address_mahmound}}
\author{ Constantino Lagoa\fnref{address_mahmound}}
\author{Necdet Aybat\fnref{address_necdet}}
\author{Hao Che\fnref{address_hao}} 
\author{Zhisheng Duan\fnref{address_wang}}
 \fntext[address_wang]{Jingyao Wang and Zhisheng Duan are with the Department of Mechanics and Engineering Science, Peking University, Beijing 100871, China, (yayale.8@163.com, duanzs@pku.edu.cn).}

\fntext[address_mahmound]{Mahmoud Ashour and Constantino Lagoa are with the Department of Electrical Engineering and Computer Science, Pennsylvania State University, University Park, PA 16802, USA, (mma240@psu.edu, lagoa@engr.psu.edu).}

\fntext[address_necdet]{Necdet Aybat is with the Department of Industrial Engineering, Pennsylvania State University,
      University Park, PA 16802, USA, (nsa10@psu.edu).}
        
\fntext[address_hao]{Hao Che is with the Department of Computer Science and Engineering, University of Texas at Arlington, Arlington, TX 769019, USA, (hche@cse.uta.edu).}

\tnotetext[acknowledgement]{ This work was partially supported by NSF grants CNS-1329422, CMMI-1635106,
FCC-1629625, NNSF of China grants 61673026 and the China Scholarship Council.}

\begin{abstract}
This paper considers the optimization-based traffic allocation problem among multiple end points in connectionless networks. The network utility function is modeled as a non-concave function, since it is the best description of the quality of service perceived by users with inelastic applications, such as video and audio streaming. However, the resulting non-convex optimization problem, is challenging and requires new analysis and solution techniques. To overcome these challenges, we first propose a hierarchy of problems whose optimal value converges to the optimal value of the non-convex optimization problem as the number of moments tends to infinity.
From this hierarchy of problems, we obtain a convex relaxation of the original non-convex optimization problem by considering truncated moment sequences. For solving the convex relaxation, we propose a fully distributed iterative algorithm, which enables each node to adjust its date allocation/ rate adaption among any given set of next hops solely based on information from the neighboring nodes. Moreover, the proposed traffic allocation algorithm converges to the optimal value of the convex relaxation at a $O(1/K)$ rate, where $K$ is the iteration counter, with a bounded optimality. At the end of this paper, we perform numerical simulations to demonstrate the soundness of the developed algorithm.
\end{abstract}

\end{frontmatter}

\section{INTRODUCTION}
Applications and services supported by modern communication networks have diverse requirements, e.g., high throughput and low latency. Traffic engineering (TE) has long been used to optimize the utilization of the limited network resources so that such requirements are fulfilled. This entails developing data rate allocation algorithms and congestion control protocols capable of maximizing a given network utility subject to network resource constraints \cite{Kelly_JORS_1998}. Many problems of recent interest arising in diverse fields can be cast as an optimization problem, and network utility maximization (NUM) is no different.

In large-scale networks, the size of the optimization problems rapidly increases as the number of nodes and links increase. This stimulates the necessity of developing decentralized control algorithms capable of decomposing the high-dimensional problem into separate moderate-size subproblems that can be solved independently and locally at various network nodes. The main idea behind such decentralized control algorithms is to distribute the computations required for the solution of the optimization problem among various nodes \cite{lagoa_2004_N}-\cite{beck2013optimal}. This approach exploits local information available at each node. Nevertheless, information exchange among different nodes is inevitable since distinct data flows share the same network resources. Therefore, distributed optimization approaches not only aim at decomposing the problem, but also minimizing the communication overhead.

In the benchmark work by Kelly et. al. \cite{Kelly_JORS_1998}, the optimization of the utility of a large-scale broadband network with limited bandwidth resources is considered. The authors propose two classes of rate control algorithms by casting the NUM problem in both primal and dual forms. In \cite{lagoa_2004_N}, a family of decentralized sending rate control laws are proposed to steer the traffic allocation to an optimal operating point while avoiding congestion. A non-linear control theoretic approach is employed in \cite{lagoa_hop} to derive adaptation laws that enable each node to independently distribute its traffic optimally among any given set of next hops. More recently, reference \cite{beck2013optimal} considers the NUM, derives its dual problem, and uses a distributed gradient-based approach for its solution. A similar approach appears in \cite{nekouei}. In spite of the existence of a relatively dense literature on NUM, most available results consider only the optimization of concave utility functions. However, it has been shown that the reward experienced by the users of real-time applications, such as video and audio streaming, cannot be accurately modeled using concave functions. Reference \cite{Yin_2015_SC} shows that the video quality perceived by users on a mobile device is a non-decreasing and step-like function with respect to the data rate, because users have almost similar quality of experience on $3$ Mbps and $1$ Mbps \cite{Yin_2015_SC}. This observation motivates considering the optimization of non-concave network utility functions, which constitutes a main focus of this paper.

Non-concave NUM is a non-convex optimization problem; hence, it is difficult to solve. Nevertheless, there exist some attempts in the literature for deriving algorithms that provide near-optimal solutions. Reference \cite{Fazel_CDC_2005} develops a centralized algorithm that solves the NUM problem with polynomial utilities. Reference \cite{Hande_Net_2007} determines the conditions under which the standard distributed dual-based algorithm can still converge to the global optimal solution with non-concave utilities.

This paper develops a distributed iterative algorithm for the optimization of a generalized class of non-concave network utility functions that capture a wide variety of real-world applications. In particular, we focus on connectionless networks, where each node is required to distribute its traffic among a set of next hops without prior arrangement so that the network utility is maximized. We handle the challenge posed by the non-convexity of the optimization problem by developing a sequence of convex relaxations whose solution converges to that of the original problem. We use results on polynomial optimization and moment sequences to derive the convex relaxations \cite{lasserre,laurent}. Furthermore, we propose an iterative primal-dual algorithm \cite{Aybat_arxiv_2016} that enables each node to distribute its traffic among the set of next hops. We emphasize on the distributed nature of the algorithm, where each node uses its local information and need not communicate with other nodes except its direct neighbors.

\section{NOTATION}

Throughout this paper, the traffic flows are assumed to be described by a fluid flow model, and the only resource constraint taken into account is link bandwidth. In the remainder of this paper, call and flow will be used interchangeably.

Let $\mathcal{N}$ denote the set of nodes in the network,
and $\mathcal{L}\subset \mathcal{N} \times \mathcal{N}$ denote the set of links connecting particular pairs of nodes. We assume that each link $l \in \mathcal{L}$ has a finite capacity $c_l>0$. Moreover, let $\mathcal{S}\triangleq \{ s_1,s_2,\ldots,s_n\}$ and $\mathcal{D}\triangleq \{ d_1,d_2,\ldots,d_n\}$ denote respectively the set of source nodes and the set of destination nodes contained in $\mathcal{N}$ such that $\mathcal{S} \cap \mathcal{D}= \emptyset$. The intended destination for each source node $s_i$ is $d_i$ for $i\in \mathcal{I}\triangleq \{1,\ldots,n\}$, i.e., without loss of generality, we assume that there is a one-to-one correspondence between  $\mathcal{S}$ and $\mathcal{D}$, and $\mathcal{I}$ denotes the set of different flow (call) types in the network. Given source node $s\in\mathcal{S}$, let $\mathcal{L}_{s}$ denote the set of links connected to it. Let the sending data rate through link $l\in\mathcal{L}_{s}$ be $x_{s,l}^{\text{out}}$, and all such sending data rates be $\mathbf{x}_{s}^{\text{out}}\triangleq [x_{s,l}^{\text{out}}]_{l\in\mathcal{L}_{s} }$. We define the aggregate sending data rate of $s\in\mathcal{S}$ be denoted by $r_{s}\triangleq \sum_{l\in\mathcal{L}_{s} }x_{s,l}^{\text{out}} $. Also, let $\mathcal{B}\triangleq \mathcal{N} \setminus  (\mathcal{S} \cup \mathcal{D})= \{b_1,b_2,\ldots,b_m\}$ denote the set of forwarding nodes contained in $\mathcal{N}$. Given $b\in\mathcal{B} $, let $\mathcal{I}_b$ be the set of flows visiting node $b$, and $\mathcal{L}_b\subseteq \mathcal{L}$ denote the set of links connected to it. Suppose $\mathcal{L}_{b,i}^{\text{out}}\subseteq \mathcal{L}_b$ denote the set of outgoing links from $b$ associated with calls (flows) of type $i\in \mathcal{I}_b$. Similarly, let $\mathcal{L}_{b,i}^{\text{in}} \subset \mathcal{L}_b$ denote the set of incoming links to $b$ associated with calls (flows) of type $i \in\mathcal{I}_b$. Furthermore, given $b\in\mathcal{B} $, for each $i\in \mathcal{I}_b$ and $l\in \mathcal{L}_{b,i}^{\text{out}}$, let $x_{i,b,l}^{\text{out}}$ denote the data rate of call type $i \in\mathcal{I}_b$, associated with  $s_i$ and $d_i$, forwarded from node $b$ through link $l\in\mathcal{L}_{b,i}^{\text{out}} $.
The above notation is exemplified in Fig.~1 for the case of allocating flows associated with two source nodes, $s_1$ and $s_2$, and two destination nodes, $d_1$ and $d_2$.

Given $b\in\mathcal{B} $ and $l\in \mathcal{L}_b$, 
 let $\mathcal{I}_{b,l}^{\text{in}} \subset \mathcal{I}$ be the set of call types forwarded to node $b$ through link $l$, and $\mathcal{I}_{b,l}^{\text{out}}\subseteq \mathcal{I}_b$ be the set of call types forwarded from node $b$ through link $l$. Moreover, given node $b\in\mathcal{B} $ and link $l\in \mathcal{L}_b$, let $e_l(b)$ denote the adjacent node to $b$ through link $l$.
We summarize all the notation for the communication network in Table I for the convenience of the reader.

Now, given node $b\in\mathcal{B} $, let the vector containing all flow rates departing from node $b$ through link $l\in\mathcal{L}_{b} $ be denoted by $\mathbf{x}_{b,l}^{\text{out}}\triangleq [x_{i,b,l}^{\text{out}}]_{i\in\mathcal{I}_{b,l}^{\text{out}}}\in \text{R}_+^{|\mathcal{I}_{b,l}^{\text{out}}|}$, where $|.|$ denotes the cardinality of a set.

Given node $b\in\mathcal{B} $ and $l\in \mathcal{L}_b$, let $\mathbf{1}_{b,l}\in \text{R}^{1\times |\mathcal{I}_{b,l}^{\text{out}}|}$ be the row vector with all elements equal to $1$. In a similar way, let $\delta_{b,l}\in \text{R}^{1\times |\mathcal{I}_{b,l}^{\text{in}}|}$ be the row vector with all elements equal to $1$ if link $l$ is bidirectional, and $0$ otherwise.

Also, let $\|.\|$ denote the Euclidean norm. Given a convex set $\mathcal{A}$, let $\mathit{I}_{\mathcal{A}}(.)$ denote the indicator function of $\mathcal{A}$, i.e., $\mathit{I}_{\mathcal{A}}(\omega)=0$ for $\omega \in \mathcal{A}$ and equal to $+ \infty$ otherwise, and let $\mathit{P}_{\mathcal{A}}(\omega)\triangleq  \text{argmin} \{ \|\upsilon-\omega\|: \upsilon \in  \mathcal{A} \}$ denote the projection onto $\mathcal{A}$. Given a closed convex set $\mathcal{A}$, we define the distance function as $d_{\mathcal{A}}(\omega)\triangleq \|\mathit{P}_{\mathcal{A}}(\omega)-\omega\| $. Also, $\mathbf{I}_n$ is the $n\times n$ identity matrix.

\begin{figure}
\centering
\includegraphics[width=1\columnwidth, height=0.45\columnwidth]{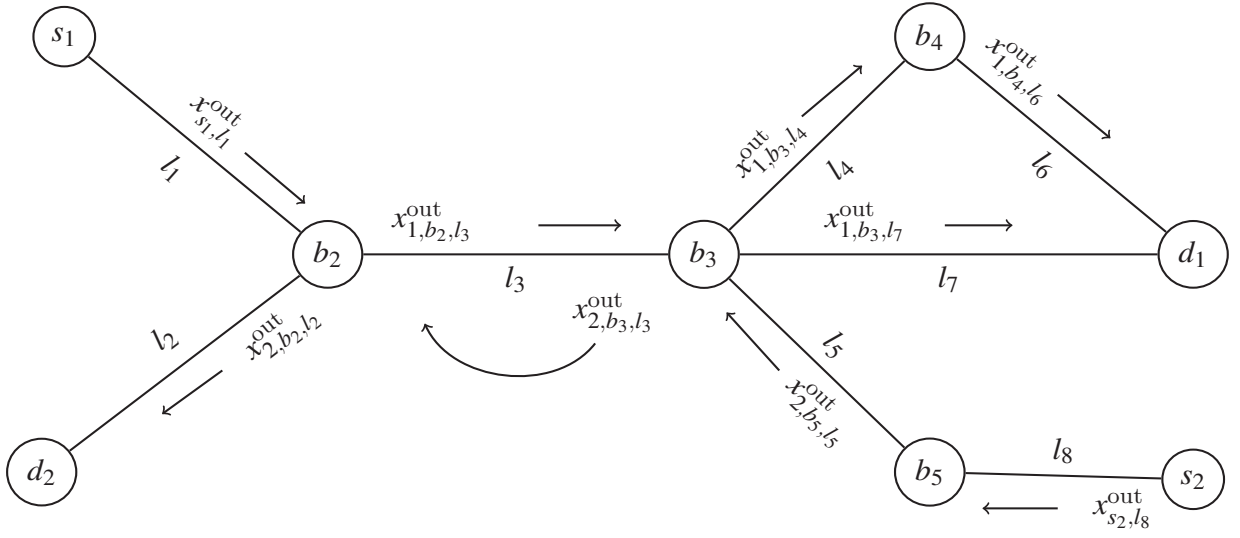} \\
\caption{Notation example.}
\label{fig_3}
\end{figure}

\begin{table}
\centering
\caption{LIST OF NOTATION}
\label{table:notation}
\resizebox{0.7\textwidth}{!}{
 \begin{tabular}{| c | l |}
 \hline
 Notation & Desciption \\
 \hline
 $\mathcal{N}$  & The set of nodes in the network. \\[1pt]
  $\mathcal{S}$ & The set of source nodes.\\[1pt]
  $\mathcal{B}$  & The set of forwarding nodes. \\[1pt]
  $e_l(b)$ & The node connected to node $b$ through link $l$. \\[3pt]
  $\mathcal{L}_{s}$ ($\mathcal{L}_b$)& The set of links connected to node $s$ (node $b$). \\[1pt]
 $\mathcal{L}_{b,i}^{\text{out}} (\mathcal{L}_{b,i}^{\text{in}})$& The set of outgoing (incoming) links from (at) node $b$\\[1pt]
 &for flows of type ${i}\in \mathcal{I}$. \\[1pt]
   $\mathcal{I}$ & The set of different flow types.\\[1pt]
  $\mathcal{I}_b$ & The set of flows visiting node $b$. \\[1pt]
  $\mathcal{I}_{b,l}^{\text{out}} (\mathcal{I}_{b,l}^{\text{in}})$& The set of flows forwarded from (to) node $b$ through link $l$. \\[3pt]
  $r_{s}$ & The aggregate data rate of source node $s$.\\[1pt]
  $x_{s,l}^{\text{out}}$& The sending data rate of source node $s\in \mathcal{I}$ through link $l$. \\[1pt]
  $\mathbf{x}_{s}^{\text{out}}$& The vector consisting of $x_{s,l}^{\text{out}}$ for each link $l\in\mathcal{L}_{s} $. \\[1pt]
  $x_{i,b,l}^{\text{out}}$ & The data rate of flows belonging to source node $s_i$ forwarded from node $b$ \\[1pt]
  & through link $l$. \\[1pt]
  $\mathbf{x}_{b,l}^{\text{out}}$ & The vector consisting of $x_{i,b,l}^{\text{out}}$ for each type of flow $i\in \mathcal{I}_{b,l}^{\text{out}}$.\\[1pt]
 \\[1pt]
  \hline
\end{tabular}
}
\end{table}

\section{PROBLEM FORMULATION}

Consider a communication network consisting of a set of source nodes $\mathcal{S}$. Each source node $s\in \mathcal{S}$ has a local utility function $U_s(r_{s}):\mathrm{R}_{+}\to \mathrm{R}_{+}$ of its sending data rate $r_{s}$. For a fixed order $\ell >0$, the utility function is defined as a general non-concave polynomial-like function in the form
\begin{equation}\label{local_utility}
U_s(r_{s})\triangleq \sum \limits_{j=0}^{\ell} p_{s,j} (r_{s})^{j/\ell}.
\end{equation}
This particular form of objective functions is so flexible that it can be used to approximate a wide variety of functions arising in practical applications such as step functions for the video streaming case \cite{nekouei}.

The objective of this paper is to design a data rate allocation algorithm for the communication network such that the utilization of resources is maximized, while satisfying the network resource constraints. The network resource constraints considered in this paper include link capacity constraints, Minimum Rate Guaranteed and Upper Bounded Rate Service (MRGUBRS) requirements, and flow conservation constraints through nodes.

More precisely, for any link $l\in \mathcal{L}$, the aggregated flows going through this link should not exceed the link capacity. For example, in Fig.~1, the bidirectional link $l_3$ is shared by flows belonging to two source nodes. The data rates $x_{1,b_2,l_3}^\text{out}$ and $x_{2,b_3,l_3}^\text{out}$ going through this link should satisfy that
\begin{equation}\label{capacity_example}
x_{1,b_2,l_3}^\text{out} + x_{2,b_3,l_3}^\text{out} \leq c_{l_3}.
\end{equation}
For the unidirectional link $l_2$,
node $b_2$ forwards data rate $x_{2,b_2,l_2}^\text{out}$ through this link. Then, $x_{2,b_2,l_2}^\text{out}$ is upper bounded by $c_{l_2}$.

Given flows of type $i\in \mathcal{I}$, recall that flows of type $i\in \mathcal{I}$ is associated with source/destination pairs $s_i/d_i$.   For fixed link $l\in \mathcal{L}_{s_i}$, the corresponding data rate $x_{i,l}^\text{out}$ is determined at source node $s_i \in \mathcal{S}$ and multiple paths are available for transporting these flows.
More precisely, each node on these paths divide incoming traffic into available links by striving to conserve the flows belonging to each source node (i.e., aims at no losses) and to avoid link congestion. In Fig.~1, node $b_3$ tries to satisfy
\begin{equation}\label{flow_conservation_node_example_one}
x_{1,b_2,l_3}^\text{out}=x_{1,b_3,l_4}^\text{out}+x_{1,b_3,l_7}^\text{out}.
\end{equation}

Finally, flows belonging to each source node $s\in \mathcal{S}$ is assumed to be of the MRGUBS category, i.e., for some $0< \xi_s<\zeta_s$ and $s\in \mathcal{S}$,
\begin{equation}\label{mrgubrs_example}
\xi_s\leq r_{s}\leq \zeta_s.
\end{equation}

Now, considering the above constrains and assumptions, we can formulate the problem of optimal
traffic allocation as follows:
\begin{equation}\label{objective_func_one}
\text{maximize} \sum\limits_{s\in \mathcal{S}} U_{s}(r_{s}),
\end{equation}
subject to the network capacity constraints \footnote{Note that the formulation in this paper allows for the existence of bidirectional links.}
\begin{equation*}
\sum\limits_{i \in \mathcal{I}_{b,l}^{\text{out}}} x_{i,b,l}^{\text{out}}+\sum\limits_{i \in \mathcal{I}_{b,l}^{\text{in}}}x_{i,e_l(b),l}^{\text{out}} \leq c_l,\ l \in \mathcal{L}_b,\ b \in \mathcal{B},
\end{equation*}
the flow conservation constraints at each node
\begin{equation*}\label{fccn_v_one}
\sum\limits_{l \in \mathcal{L}_{b,i}^{\text{in}}} x_{i,e_l(b),l}^{\text{out}}- \sum\limits_{\tilde{l} \in \mathcal{L}_{b,i}^{\text{out}}}x_{i,b,\tilde{l}}^{\text{out}}=0,\ i \in \mathcal{I}_b,\  b \in \mathcal{B},
\end{equation*}
the non-negativity of forwarded data rates constraints
\begin{equation*}\label{non_negative_out_rate_one}
x_{i,b,l}^{\text{out}}\geq 0,\ i\in \mathcal{I}_{b,l}^{\text{out}},\ l\in \mathcal{L}_b,\  b\in \mathcal{B},
\end{equation*}
and the MRGUBS requirements
\begin{equation*}\label{mrgubs_v_one}
( \mathbf{x}_{s}^{\text{out}},\ r_{s})\in \mathcal{X}_{s},\ s\in \mathcal{S},
\end{equation*}
where the set $\mathcal{X}_{s}$ is defined as
\begin{equation*}
\mathcal{X}_{s}\triangleq \bigg\{  ( \mathbf{x}_{s}^{\text{out}},\ r_{s})\in \text{R}_+^{|\mathcal{L}_{s}|}\times \text{R}_+:
\xi_s \leq r_{s} \leq \zeta_s,\ r_{s}= \sum_{l\in\mathcal{L}_{s} } x_{i,l}^{\text{out}}\bigg\}.
\end{equation*}

Most literature in the context of NUM considers maximizing concave diminishing functions.
However,
modern communication networks are dominated by various inelastic applications, such as internet video and audio streaming. Users' satisfaction for these applications cannot be modeled with concave functions. It is better to be described as non-concave functions. For instance, the utility for voice applications is a sigmoidal function \cite{Fazel_CDC_2005}. Thus, we consider users' perceived qualification of Cost of Service (CoS) and model the utility function as a general class of non-concave polynomial functions.
Moreover, the challenges of attempting to solve the resulting traffic allocation problem (\ref{objective_func_one}) are two-fold.
First, the optimization problem obviously constitutes a non-convex problem since its objective function is non-concave. Second, global information on fast timescale events, as required in the above formulation, is not generally available. The latter fact stimulates the necessity of developing a distributed algorithm that converges to the optimal data rate allocation of the non-convex NUM problem.

\section{MAIN RESULTS}
In this section, we present our approach used to overcome the challenges opposed by the non-convexity of the optimization problem. In particular, we first present a convex relaxation to the non-convex NUM problem (\ref{objective_func_one}). This convex relaxation is chosen from a hierarchy of optimization problems whose optimal value converges to the optimal value of problem (\ref{objective_func_one}) as the number of moments tends to infinity. For solving the convex relaxation problem, we propose a distributed primal-dual algorithm (DPDA), which enables all nodes to update their data rate allocation solely using immediate local information.
A salient feature of the proposed algorithm is that the iterate sequence converges to the optimal solution at a $O(1/K)$ rate, where $K$ is the iteration counter, with a bounded optimality.

\subsection{NUM convex relaxation}
The non-convexity of the optimization problem (\ref{objective_func_one}) opposes challenges for us to analysis and solve the traffic allocation problem. However, the following proposition provides a hierarchy of optimization problems whose optimal value converges to the optimal value of the non-convex optimal problem (\ref{objective_func_one}). For solving the traffic allocation problem, we choose a convex one from this hierarchy of problem by truncating the number of moments to the finite case. This proposition is one of the main results of this paper.

\begin{proposition}\label{Proposition1}
The solution of the following optimization problem converges to the solution
of the non-convex NUM problem (\ref{objective_func_one}) with non-concave user utility functions of the form (\ref{local_utility}) as the positive parameter $\alpha \rightarrow \infty$. Moreover, problem (\ref{op_five}) is convex if $\alpha \leq \ell$.
\begin{equation}\label{op_five}
\begin{aligned}
&\underset{ \mathbf{x}}{\text{maximize}}
& & \sum\limits_{s \in \mathcal{S}} \mathbf{p}_{s}^T\mathbf{m}_{s}\\
& \text{subject\ to}\
& & {m}_{s,0}=1,\  s\in \mathcal{S},\\
& & &  \mathbf{M}(0, \alpha,\mathbf{m}_{s})\succeq 0,\  s\in \mathcal{S}, \\
& & & \beta_{s}  \mathbf{M}(0, \alpha-2,\mathbf{m}_{s})- \mathbf{M}(2, \alpha,\mathbf{m}_{s}) \succeq 0, \\
& & & m_{s,j} \leq (r_{s})^{j/\ell},\ j\in \{1,\ldots,\alpha\},\ s \in \mathcal{S},\\
& & & x_{s,l}^{\text{out}} \leq c_l, l \in \mathcal{L}_{s},\  s\in \mathcal{S},\\
& & & \mathbf{1}_{b,l} \mathbf{x}_{b,l}^{\text{out}}+\delta_{b,l}\mathbf{x}_{e_l(b),l}^{\text{out}} \leq c_l,\  l \in \mathcal{L}_b,\ b \in \mathcal{B},\\
& & & B \mathbf{x}=0,\\
& & & ( \mathbf{x}_{s}^{\text{out}},\ r_{s})\in \mathcal{X}_{s},\ s\in \mathcal{S},\\
& & & \mathbf{x}_{b,l}^{\text{out}}\succeq 0,\ l\in \mathcal{L}_b,\  b\in \mathcal{B}.
\end{aligned}
\end{equation}
The objective function is a linear function of variables $\mathbf{m}_{s}=[m_{s,j}]_{j \in \{0,\hdots,\alpha\}}$ with parameters $\mathbf{p}_{s}=[p_{s,j}]_{j \in \{0,\hdots,\alpha\}}$.
The decision variable $\mathbf{x}$ of problem (\ref{op_five}) is a vector consisting of the data rate $x_{s,l}^{\text{out}}$, $r_{s}$ and $\mathbf{m}_{s}$ for each $s\in\mathcal{S}$, and the sending data rate
$\mathbf{x}_{b,l}^{\text{out}},\ b\in\mathcal{B} $ for each $l\in \mathcal{L} $. More precisely, the dimension of vector $\mathbf{x}$ is $\sum_{s\in\mathcal{S}} (|\mathcal{L}_{s} |+\alpha+2)+\sum_{b\in\mathcal{B}}\sum_{i\in\mathcal{I}_{b}}|\mathcal{L}_{b,i}^{\text{out}}|$.
In the constraints,
$B\in \mathrm{R}^{\left(\sum_{b\in \mathcal{B}} |\mathcal{I}_b|\right)\times \left(\sum_{s\in\mathcal{S}} (|\mathcal{L}_{s} |+\alpha+2)+\sum_{b\in\mathcal{B}}\sum_{i\in\mathcal{I}_{b}}|\mathcal{L}_{b,i}^{\text{out}}| \right)}$ denotes the edge-node-like incidence matrix, i.e., the entry $B_{(s,b,l),\omega}$, corresponding to flow-node-link triplet $(s,b,l)\in \mathcal{S}\times \mathcal{B}\times \mathcal{L} $ and $\omega\in \mathbf{x}$, equal to $1$ if the data rate $\omega$ of flows belonging to source node $s$ is forwarded from node $b$ through link $l$, $-1$ if the the data rate $\omega$ is received at node $b$, and $0$ otherwise.
 $\beta_{s} $ is a known upper bound on the aggregate data rate of source $s\in\mathcal{S}$,
and the moment matrices $\mathbf{M}\in \mathrm{R}^{h+1} \times \mathrm{R}^{h+1}$ are of the form
\begin{equation}\label{hankel_matrix_two}
\mathbf{M}(k,k+2h,\mathbf{m}_{s}) =
\left[\begin{smallmatrix}
m_{s,k} & m_{s,k+1} & \ldots &  m_{s,k+h}\\
m_{s,k+1} &  \reflectbox{$\ddots$}  &  \reflectbox{$\ddots$}   & m_{s,k+h+1}\\
\vdots &   \reflectbox{$\ddots$}   & \reflectbox{$\ddots$}   &  \vdots\\
m_{s,k+h} & \ldots &\ldots & m_{s,k+2h}
\end{smallmatrix}\right].
\end{equation}

\end{proposition}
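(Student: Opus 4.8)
The plan is to establish the two claims separately: first convergence of the optimal value of \eqref{op_five} to that of \eqref{objective_func_one} as $\alpha\to\infty$, and then convexity of \eqref{op_five} when $\alpha\le\ell$. The structural idea is that the only non-convexity in \eqref{objective_func_one} lives in the objective $\sum_s U_s(r_s)=\sum_s\sum_{j=0}^\ell p_{s,j}r_s^{j/\ell}$, which after the substitution $t_s:=r_s^{1/\ell}$ becomes a polynomial $\sum_j p_{s,j}t_s^j$ on the interval $[\xi_s^{1/\ell},\zeta_s^{1/\ell}]$. Maximizing a polynomial over a compact interval is a univariate moment problem, so I would introduce, for each source $s$, the truncated moment vector $\mathbf m_s=[m_{s,j}]_{j=0}^{\alpha}$ intended to represent $m_{s,j}=\int t^j\,d\mu_s(t)$ for a probability measure $\mu_s$ supported on $[\xi_s^{1/\ell},\zeta_s^{1/\ell}]$ (this is where $m_{s,0}=1$ comes from). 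By standard results on the truncated Hankel/moment problem on an interval (Lasserre, Laurent — already cited as \cite{lasserre,laurent}), a sequence $(m_{s,0},\dots,m_{s,\alpha})$ is the truncated moment sequence of such a measure if and only if the localizing matrix conditions hold: $\mathbf M(0,\alpha,\mathbf m_s)\succeq0$ together with $\beta_s\mathbf M(0,\alpha-2,\mathbf m_s)-\mathbf M(2,\alpha,\mathbf m_s)\succeq0$ (the latter encoding $t(\beta_s-t)\ge0$ type support constraints, with $\beta_s$ the upper bound). Since the objective $\mathbf p_s^T\mathbf m_s$ is linear in the first $\ell+1$ moments (pad $\mathbf p_s$ with zeros for $j>\ell$), relaxing $r_s^{j/\ell}$ to an arbitrary admissible moment $m_{s,j}$ can only increase the optimum; conversely, as $\alpha\to\infty$ the moment conditions force $\mathbf m_s$ to converge (along subsequences, by a compactness/Helly argument on measures on a compact set) to the genuine moments of a measure, and extreme-point / Dirac-measure arguments show the supremum over measures equals the supremum over point masses, i.e. over feasible $r_s$ — giving equality in the limit. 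The coupling constraints (capacity, flow conservation $B\mathbf x=0$, $\mathcal X_s$, non-negativity) are untouched by this moment lifting, so they carry through verbatim.

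Concretely, I would structure the convergence argument in three steps. Step 1: show \eqref{op_five} is a relaxation for every $\alpha$, so its optimal value $V(\alpha)$ is an upper bound for the optimal value $V^\star$ of \eqref{objective_func_one} — here the inequality $m_{s,j}\le r_s^{j/\ell}$ plus $m_{s,0}=1$ plus the semidefinite constraints must be checked to be satisfied by the point-mass choice $m_{s,j}=r_s^{j/\ell}$, which is where one verifies that the Hankel matrix of $(1,t,t^2,\dots)$ evaluated at a single point $t=r_s^{1/\ell}$ is rank-one PSD and satisfies the localizing inequality because $t\le\beta_s$. Step 2: show $V(\alpha)$ is nonincreasing in $\alpha$ (a longer moment sequence is more constrained) and bounded below by $V^\star$, hence converges. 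Step 3: take optimal $\mathbf m_s^{(\alpha)}$, extract measures $\mu_s^{(\alpha)}$ realizing the truncations (truncated moment realizability on the interval), pass to a weak-$*$ limit $\mu_s$, and use that $\int(\sum_j p_{s,j}t^j)\,d\mu_s(t)\le\max_{t}\sum_j p_{s,j}t^j$ over the interval, with the max attained at a feasible $r_s$; combined with the flow constraints being closed, the limit is feasible for \eqref{objective_func_one} and attains value $\ge\lim V(\alpha)$, forcing equality.

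For the convexity claim when $\alpha\le\ell$: the objective $\mathbf p_s^T\mathbf m_s$ and the constraint $m_{s,0}=1$ are linear; $B\mathbf x=0$, the capacity inequalities, $\mathbf x_{b,l}^{\text{out}}\succeq0$ and membership in the polyhedron $\mathcal X_s$ are all linear/convex; the two matrix inequalities $\mathbf M(0,\alpha,\mathbf m_s)\succeq0$ and $\beta_s\mathbf M(0,\alpha-2,\mathbf m_s)-\mathbf M(2,\alpha,\mathbf m_s)\succeq0$ are linear matrix inequalities in $\mathbf m_s$, hence define convex (spectrahedral) sets. The only possibly troublesome constraint is $m_{s,j}\le r_s^{j/\ell}$ for $j\in\{1,\dots,\alpha\}$: the set $\{(m,r):m\le r^{j/\ell},\ r\ge0\}$ is convex precisely when $r\mapsto r^{j/\ell}$ is concave, i.e. when $j/\ell\le1$, i.e. $j\le\ell$; this is exactly guaranteed by the hypothesis $\alpha\le\ell$, since then every index $j\le\alpha\le\ell$. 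Intersecting finitely many convex sets yields a convex feasible region, and maximizing a linear objective over it is a convex program. The main obstacle I anticipate is the convergence step — specifically making the ``extract a measure from a truncated moment sequence on a compact interval, then pass to the limit'' argument rigorous, since truncated moment realizability is subtler than the full moment problem and one must invoke the correct flat-extension / interval-moment characterization from \cite{lasserre,laurent} and handle the joint limit of $\mathbf m_s^{(\alpha)}$ and the rate variables $\mathbf x^{(\alpha)}$ together using compactness of the (bounded) feasible set for $\mathbf x$.
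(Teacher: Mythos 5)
Your proposal follows essentially the same route as the paper's Appendix~A proof: the substitution $y_s=r_s^{1/\ell}$, lifting to probability measures via the Dirac/point-mass argument (the paper's Lemma~1), the truncated Hankel/localizing matrix characterization of moment sequences on a bounded interval (the paper's Theorem~1, drawn from Krein--Nudelman), and the approximation of $y_s\le r_s^{1/\ell}$ by $m_{s,j}\le r_s^{j/\ell}$. If anything, your write-up is more complete than the paper's, which leaves the three-step convergence argument implicit and does not address the convexity claim for $\alpha\le\ell$ at all, whereas you correctly pin it on the concavity of $r\mapsto r^{j/\ell}$ for $j\le\ell$.
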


\begin{proof}
The proof is shown in Appendix A.
\end{proof}

Hereafter, we use $\alpha = \ell$.
It is worth mentioning that the result of Proposition $1$ holds for the even order $\ell$. Nonetheless, similar results can be derived for the odd $\ell$, which is omitted for brevity. The proposed problem (\ref{op_five}) constitutes a convex optimization problem, because it maximizes the sum of linear functions subject to convex constraints. Therefore, it can be easily solved if global information is available. Nevertheless, the objective of this paper is to solve this problem in a distributed fashion that leverages per hop information available at each node.

Before moving on, we
introduce some notation that renders the formulation of (\ref{op_five}) conveniently compact.
For every $s \in \mathcal{S}$, let the set $\mathcal{A}_{s}$ be defined as
\begin{equation}\label{set_s}
{\small
\begin{aligned}
& \mathcal{A}_{s}=\{  (\mathbf{x}_{s}^{\text{out}}, \mathbf{m}_{s},  r_{s})\in \mathrm{R}_+^{|\mathcal{L}_{s}|}\times\mathrm{R}^{\ell+1}\times   \mathrm{R}_+:\ {m}_{s,0}=1, \\
& \mathbf{M}(0, \ell, \mathbf{m}_{s})
\succeq 0, \ \beta_{s}  \mathbf{M}(0, \ell-2, \mathbf{m}_{s})-  \mathbf{M}(2,\ell, \mathbf{m}_{s}) \succeq 0,\\
& x_{s,l}^{\text{out}}\leq c_l, l\in \mathcal{L}_{s}, m_{s, j} \leq (r_{s})^{j/\ell},\ j\in \{1,\ldots,\ell\},\ ( \mathbf{x}_{s}^{\text{out}},\ r_{s})\in \mathcal{X}_{s}\}.
\end{aligned}
}
\end{equation}

\subsection{Algorithm DPDA}
The constrains set of convex relaxation (\ref{op_five}) consists of local constraints, e.g., capacity constraints and global constraints, e.g., flow conservation constraints through nodes. The existence of global constraints renders difficulty for us to solve problem (\ref{op_five}) in a distributed fashion. However, 
the primal-dual method, proposed by Chambolle and Pock in \cite{Chambolle_mathe_2015} for solving convex-concave saddle point problems makes it possible. This algorithm can be adapted to solve the multi-agent consensus optimization problem as discussed in \cite{Aybat_arxiv_2016}. We also use the distributed primal-dual algorithm in \cite{Aybat_arxiv_2016} to solve our traffic allocation problem (\ref{op_five}). We present the resulting iterative algorithm, i.e., DPDA, of which iterate sequence converges to the solution of (\ref{op_five}). The details of developing DPDA can be found in Appendix B.

\begin{algorithm}\label{algorithm}
\caption{\textbf{DPDA}  $\newline\gamma, [\kappa_{b,l}]_{l\in\mathcal{L}_b,b\in \mathcal{B}}, [\tau_{s}]_{s\in\mathcal{S}},[\tau_{i,b,l}]_{i\in \mathcal{I}_{b,l}^{\text{out}},l\in\mathcal{L}_b,b\in \mathcal{B}}, [\lambda_{b,l}^0]_{l\in\mathcal{L}_b,b\in \mathcal{B}}, \newline \mathbf{x}_{s}^{{\text{out}},0}=[x_{s,l}^{{\text{out}},0}]_{l\in \mathcal{L}_{s}},
r_{s}^0, \mathbf{m}_{s}^0,\newline\mathbf{x}_{b,l}^{{\text{out}},0}=[x_{i,b,l}^{{\text{out}},0}]_{i\in \mathcal{I}_{b,l}^{\text{out}},l\in\mathcal{L}_b, b\in \mathcal{B}}$}
 Initialization $z_{s,l}^0 \leftarrow x_{s,l}^{{\text{out}},0}$, $\forall l\in \mathcal{L}_{s}$, $ s\in \mathcal{S}$, $z_{i,b,l}^0 \leftarrow x_{i,b,l}^{{\text{out}},0}$, $\forall i \in \mathcal{I}_{b,l}^{\text{out}}$, $l \in \mathcal{L}_b$, $b\in \mathcal{B}$\\
\For{$k\geq 0$}
{/* Each source node $ s \in \mathcal{S}$ updates its desired rate by solving a convex semidefinite program.*/
$(\mathbf{x}_{s}^{{\text{out}}, k+1},\ \mathbf{m}_{s}^{ k+1},\  r_{s}^{k+1})
\leftarrow
\mathit{P}_{\mathcal{A}_{s}}\bigg( [x_{s,l}^{{\text{out}},k}-\gamma \tau_{s} (z_{s,l}^k-  \sum\limits_{\tilde{l} \in \mathcal{L}_{e_l(s)}} z_{i,e_l(s), \tilde{l}}^k)]_{l\in \mathcal{L}_{s}},\ \mathbf{m}_{s}^k +\tau_{s}\mathbf{p}_{s}  ,\ r_{s}^{k} \bigg)$\\
/*Each forwarding node $b\in\mathcal{B}$ updates its desired sending data rate.*/
$
{x}_{i,b,l}^{{\text{out}}, k+1}
\leftarrow
 \mathit{P}_{\mathrm{R}_{+}} \bigg(
 {x}_{i,b,l}^{{\text{out}}, k} - \tau_{i,b,l}(  \lambda_{b,l}^k+ \gamma  (z_{i, b, l}^k+u_{i, b, l}^k-u_{i, e_l(b), l}^k
 )\bigg),\ \forall i\in \mathcal{I}_{b,l}^{\text{out}},\ l \in \mathcal{L}_b,\  b\in \mathcal{B}$,
  where
 $
 u_{i, b, l}^k=\sum\limits_{\tilde{l}\in \mathcal{L}_{b,i}^{\text{out}}}z_{i, b, \tilde{l}}^k-  \sum\limits_{\bar{l} \in \mathcal{L}_{b,i}^{\text{in}}} z_{i, e_{\bar{l}}(b), \bar{l}} ^k   $
  and
   $
  u_{i, e_l(b), l}^k=\sum\limits_{\hat{l}\in \mathcal{L}_{e_l(b),i}^{\text{out}}}z_{i, e_l(b), \hat{l}}^k-  \sum\limits_{\breve{l} \in \mathcal{L}_{e_l(b),i}^{\text{in}}} z_{i, e_{\breve{l}}(e_{l}(b)), \breve{l}} ^k  $\\
/*Each link $l\in \mathcal{L}$ updates its link price.*/
$\newline \lambda_{b, l}^{k+1}
\leftarrow  \mathit{P}_{\mathrm{R}_+}\bigg(\lambda_{b, l}^k+\kappa_{b, l} (\mathbf{1}_{b,l}(2 \mathbf{x}_{b,l}^{{\text{out}}, k+1}-\mathbf{x}_{b,l}^{{\text{out}}, k})$
$+
\delta_{b,l}(2 \mathbf{x}_{e_l(b),l}^{{\text{out}}, k+1}-\mathbf{x}_{e_l(b),l}^{{\text{out}}, k}) - c_l)\bigg),\ \forall b\in \mathcal{B} $\\
/*The following local variables are communicated among neighboring nodes.*/
$z_{s,l}^{k+1}
\leftarrow z_{s,l}^{k} - x_{s,l}^{{\text{out}}, k}+ 2x_{s,l}^{{\text{out}}, k+1},\ \forall l\in \mathcal{L}_{s},\ s\in \mathcal{S}
\newline z_{i, b, l}^{k+1}
\leftarrow z_{i, b, l}^{k}- x_{i, b, l}^{{\text{out}}, k}+2 x_{i, b, l}^{{\text{out}}, k+1},\ \forall i \in \mathcal{I}_{b,l}^{\text{out}},\ l \in \mathcal{L}_b,\ b\in \mathcal{B}$
}
\end{algorithm}

The suboptimality and feasibility of the DPDA
iterate sequence can be bounded as in the following theorem.

\begin{theorem}\label{theorem_convergence}
Given the communication network and the convex optimization problem (\ref{op_five}). Let $d_{s}>0,\ s\in \mathcal{S}$ and $d_{i,b,l}>0,\ i\in \mathcal{I}_{b,l}^{\text{out}},\ l\in \mathcal{L}_b,\ b\in \mathcal{B}$ be given (sufficiently large) constants. Recall that the decision variable $\mathbf{x}$ of problem (\ref{op_five}) is a vector consisting of the data rate $\mathbf{x}_{s,l}^{\text{out}}$, $r_{s}$ and $\mathbf{m}_{s}$ for each $s\in\mathcal{S}$, and the sending data rate
$\mathbf{x}_{b,l}^{\text{out}},\ b\in\mathcal{B} $ for each $l\in \mathcal{L} $. Also recall that vector variables $\mathbf{\lambda},\mathbf{\theta}$ are the dual variables associated with the capacity constraints and the flow conservation constraints at nodes, respectively. Let $(\mathbf{x}^{\star},\mathbf{\lambda}^{\star},\mathbf{\theta}^{\star})$ be an arbitrary saddle-point for the Lagrange function of problem (\ref{op_five}), and
$\{\mathbf{x}^{k}\}_{k\geq 0}$ be the iterate sequence generated using Algorithm DPDA, initialized from an arbitrary $\mathbf{x}^0$ and $[\lambda_{b,l}^0]_{l\in\mathcal{L}_b,b\in \mathcal{B}}=\mathbf{0}$. 
 Let the primal-dual step sizes $[\tau_{s}]_{s\in\mathcal{S}},[\tau_{i,b,l}]_{i\in \mathcal{I}_{b,l}^{\text{out}},l\in\mathcal{L}_b,b\in \mathcal{B}}$ and $\gamma$ be positive constants satisfying the following inequalities
 \begin{equation}\label{condition_one}
\frac{1}{\tau_{s}}-\gamma(4+{d}_{s} ) \geq 0,
\end{equation}
for all $\ s\in \mathcal{S}$, and
 \begin{equation}\label{condition_two}
\frac{1}{\kappa_{b,\ l}}\bigg(\frac{1}{\tau_{i,b,l}}-\gamma(4+{d}_{i,b,l} ) \bigg)\geq m_l+1,
\end{equation}
for all $i\in\mathcal{I}_{b,l}^{\text{out}},l\in\mathcal{L}_b,b\in\mathcal{N}$, where $m_l$ is the total number of sources using link $l$ to transport flows. Denote the average of sending data rates
by $\bar{\mathbf{x}}^{K}\triangleq \frac{1}{K} \sum\limits_{k=1}^K \mathbf{x}^k$, where $K\geq 1$. Then, $\{\bar{\mathbf{x}}^{K}\}$ converges to the maximum of the utility function of the problem (\ref{op_five}) subject to the resource allocation constraints. In particular, the average of the iterative sequence asymptotically converges to the feasible solution, i.e.,
\begin{equation}\label{theorem_error_bounds_one}
\|\mathbf{\theta}^{\star}\| \|B\bar{\mathbf{x}}^{K} \|+  \sum\limits_{b\in\mathcal{B}}\sum \limits_{l\in\mathcal{L}_b} \| \mathbf{\lambda}^{\star}_{b,l} \| h(\bar{\mathbf{x}}_{b,l}^{\text{out}},\bar{\mathbf{x}}_{e_l(b),l}^{\text{out}})
\leq \frac{\Theta_1}{K}, \forall K\geq 1.
\end{equation}
It also asymptotically maximizes the utility function of the problem (\ref{op_five}), i.e.,
\begin{equation}\label{theorem_error_bounds_two}
| \sum\limits_{s\in\mathcal{S}}\mathbf{p}_s^T(\bar{\mathbf{m}}_s-\mathbf{m}_s^{\star})|
\leq  \frac{\Theta_1}{K}, \forall K\geq 1,
\end{equation}
where the notation $h(\bar{\mathbf{x}}_{b,l}^{\text{out}},\bar{\mathbf{x}}_{e_l(b),l}^{\text{out}})$ and $\Theta_1$ is defined in Appendix C.

\end{theorem}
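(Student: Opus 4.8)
The plan is to identify Algorithm DPDA as a specialization of the decentralized primal--dual iteration of \cite{Aybat_arxiv_2016} --- itself a distributed adaptation of the Chambolle--Pock saddle-point method \cite{Chambolle_mathe_2015} --- applied to a consensus reformulation of (\ref{op_five}), and then to inherit its ergodic $O(1/K)$ guarantee with the generic constants instantiated in terms of the network data. First I would rewrite (\ref{op_five}) as the minimization of $-\sum_{s\in\mathcal{S}}\mathbf{p}_s^T\mathbf{m}_s$ over $\mathbf{x}$, subject to the node/link-separable constraints $(\mathbf{x}_s^{\text{out}},\mathbf{m}_s,r_s)\in\mathcal{A}_s$ and $\mathbf{x}_{b,l}^{\text{out}}\succeq 0$; the coupling capacity constraints $\mathbf{1}_{b,l}\mathbf{x}_{b,l}^{\text{out}}+\delta_{b,l}\mathbf{x}_{e_l(b),l}^{\text{out}}\leq c_l$; and the linear coupling $B\mathbf{x}=0$. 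Introducing multipliers $\mathbf{\lambda}\succeq 0$ for the capacity rows and the free multiplier $\mathbf{\theta}$ for $B\mathbf{x}=0$ yields the Lagrangian $L(\mathbf{x},\mathbf{\lambda},\mathbf{\theta})$, whose saddle points are exactly the primal--dual optima $(\mathbf{x}^{\star},\mathbf{\lambda}^{\star},\mathbf{\theta}^{\star})$; existence follows from convexity, strong duality under Slater-type feasibility, and boundedness of the feasible set (the box $\mathcal{X}_s$ bounds $r_s$, hence via $m_{s,0}=1$, $\mathbf{M}(0,\ell,\mathbf{m}_s)\succeq 0$ and $m_{s,j}\leq (r_s)^{j/\ell}$ also $\mathbf{m}_s$, while the capacity constraints bound the remaining rates). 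To obtain the fully distributed updates (as developed in Appendix~B) I would replace the global operator $B$ and the shared capacity rows by per-node/per-link local copies, which is exactly what introduces the auxiliary variables $z$ and the local flow residuals $u$ appearing in the algorithm and makes every proximal step local: the $\mathbf{x}_s$-update collapses to the projection $\mathit{P}_{\mathcal{A}_s}$ onto the semidefinite-representable set $\mathcal{A}_s$, the $\mathbf{x}_{b,l}^{\text{out}}$- and $\lambda_{b,l}$-updates to projections onto $\mathrm{R}_+$, and the multiplier $\mathbf{\theta}$ is carried implicitly through the $z,u$ iterates.

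With this identification, I would invoke the convergence theorem of the primal--dual framework: whenever the block-diagonal preconditioner built from the step sizes dominates the coupling operator --- i.e. a certain symmetric matrix is positive semidefinite --- one obtains, for every $\mathbf{\lambda}\succeq 0$ and every $\mathbf{\theta}$, the ergodic inequality
\[
L(\bar{\mathbf{x}}^K,\mathbf{\lambda},\mathbf{\theta})-L(\mathbf{x}^{\star},\bar{\mathbf{\lambda}}^K,\bar{\mathbf{\theta}}^K)\leq \frac{\Theta_1}{K},
\]
where $\Theta_1$ gathers the step-size-weighted squared distances of the initialization $(\mathbf{x}^0,\mathbf{0},\mathbf{0})$ to $(\mathbf{x}^{\star},\mathbf{\lambda}^{\star},\mathbf{\theta}^{\star})$ (its explicit form deferred to Appendix~C). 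The crucial point is to verify that the step-size conditions (\ref{condition_one})--(\ref{condition_two}) are precisely what forces this positive semidefiniteness. A row-by-row Gershgorin/diagonal-dominance estimate of the decentralized coupling operator shows that each source variable $x_{s,l}^{\text{out}}$ enters only a bounded number of coupling terms --- essentially one flow-conservation row at the neighbour $e_l(s)$ together with the extrapolation/consensus overhead --- producing the structural constant $4$ and the freely chosen slack $d_s$ in (\ref{condition_one}); each forwarding variable $x_{i,b,l}^{\text{out}}$ is additionally coupled through the capacity row of link $l$, which is shared by $m_l$ source flows, and through at most two flow-conservation rows, so after scaling by $\kappa_{b,l}$ the requirement becomes exactly the right-hand side $m_l+1$ with slack $d_{i,b,l}$ in (\ref{condition_two}).

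It then remains to split the ergodic inequality into the two stated estimates. Since every iterate satisfies the separable constraints and these sets are convex, so does $\bar{\mathbf{x}}^K$; hence $L(\bar{\mathbf{x}}^K,\mathbf{\lambda},\mathbf{\theta})=-\sum_s\mathbf{p}_s^T\bar{\mathbf{m}}_s+\langle\mathbf{\theta},B\bar{\mathbf{x}}^K\rangle+\langle\mathbf{\lambda},g^K\rangle$, where $g^K$ denotes the capacity-constraint residual vector of $\bar{\mathbf{x}}^K$, while $L(\mathbf{x}^{\star},\bar{\mathbf{\lambda}}^K,\bar{\mathbf{\theta}}^K)\leq -\sum_s\mathbf{p}_s^T\mathbf{m}_s^{\star}$ because $B\mathbf{x}^{\star}=0$ and the capacity residual of $\mathbf{x}^{\star}$ is nonpositive against $\bar{\mathbf{\lambda}}^K\succeq 0$. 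Combining this with the Lagrangian lower bound $-\sum_s\mathbf{p}_s^T\mathbf{m}_s^{\star}\leq L(\bar{\mathbf{x}}^K,\mathbf{\lambda}^{\star},\mathbf{\theta}^{\star})$ (optimality of $\mathbf{x}^{\star}$ for $L(\cdot,\mathbf{\lambda}^{\star},\mathbf{\theta}^{\star})$ under strong duality) reduces the ergodic bound to $\langle\mathbf{\lambda}-\mathbf{\lambda}^{\star},g^K\rangle+\langle\mathbf{\theta}-\mathbf{\theta}^{\star},B\bar{\mathbf{x}}^K\rangle\leq \Theta_1/K$, valid for all $\mathbf{\lambda}\succeq 0$ and $\mathbf{\theta}$. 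Picking $\mathbf{\theta}=\mathbf{\theta}^{\star}+\|\mathbf{\theta}^{\star}\|\,B\bar{\mathbf{x}}^K/\|B\bar{\mathbf{x}}^K\|$ and, per link, $\mathbf{\lambda}_{b,l}$ equal to $\mathbf{\lambda}^{\star}_{b,l}$ plus $\|\mathbf{\lambda}^{\star}_{b,l}\|$ times the unit vector along the positive part of the link-$l$ capacity violation, yields (\ref{theorem_error_bounds_one}) with $h$ equal to the Euclidean norm of that positive part; taking instead $\mathbf{\lambda}=\mathbf{0}$, $\mathbf{\theta}=\mathbf{0}$ gives $\sum_s\mathbf{p}_s^T\mathbf{m}_s^{\star}-\sum_s\mathbf{p}_s^T\bar{\mathbf{m}}_s\leq \Theta_1/K$, and the reverse inequality follows by combining the Lagrangian lower bound with the feasibility bound (\ref{theorem_error_bounds_one}) --- this is (\ref{theorem_error_bounds_two}).

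Finally, convergence of $\bar{\mathbf{x}}^K$ to an optimal solution of (\ref{op_five}) follows because the feasible region is closed and the iterates remain in the compact set fixed by $\mathcal{X}_s$ and the capacity bounds, so every accumulation point is feasible by (\ref{theorem_error_bounds_one}) and optimal by (\ref{theorem_error_bounds_two}). I expect the main obstacle to be the middle step: matching (\ref{condition_one})--(\ref{condition_two}) exactly to the positive-semidefiniteness requirement of the framework through a careful block-by-block norm bound on the decentralized coupling operator, since everything else is a direct specialization of \cite{Aybat_arxiv_2016,Chambolle_mathe_2015}.
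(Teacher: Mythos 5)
Your proposal follows essentially the same route as the paper's proof: verify that the step-size conditions (\ref{condition_one})--(\ref{condition_two}) force positive semidefiniteness of the block preconditioner matrix (the paper does this via two Schur complements reducing to $D_{\tau}-\gamma B^TB-A^TD_{\kappa}^{-1}A\succeq 0$, then a Gershgorin bound on $B^TB$ absorbing the off-diagonal coupling into the slacks $d_s$, $d_{i,b,l}$ and yielding the constant $4$), and then invoke the ergodic $O(1/K)$ saddle-point guarantee of \cite{Aybat_arxiv_2016} to obtain (\ref{theorem_error_bounds_one}) and (\ref{theorem_error_bounds_two}). The only difference is that you spell out the final step of extracting the two error bounds from the ergodic Lagrangian gap by choosing test multipliers, a step the paper delegates entirely to the citation.
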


\begin{proof}
The proof is presented in Appendix C.
\end{proof}

Algorithm DPDA is a fully distributed traffic allocation algorithm. 
This point can be verified by looking through the implementation procedure. 
The step-size parameters are decided before implementing the algorithm. It is given in Theorem \ref{theorem_convergence} that those parameters  satisfy conditions (\ref{condition_one}) and (\ref{condition_two}), both of which are local conditions. Thus, choosing the parameters requires no global information. In the first step, the variables $z_{s,l},\ l\in \mathcal{L}_{s},\ s\in \mathcal{S}$ and $z_{i,b,l},\ i \in \mathcal{I}_{b,l}^{\text{out}},\ l \in \mathcal{L}_b,\ b\in \mathcal{B}$ are local variables respectively introduced for each source node and each forwarding node. It is worth noting that giving the initial state value of $x_{s,l}^{\text{out}},\ l\in \mathcal{L}_{s},\ s\in \mathcal{S}$ and $x_{i,b,l}^{\text{out}},\ i \in \mathcal{I}_{b,l}^{\text{out}},\ l \in \mathcal{L}_b,\ b\in \mathcal{B}$ to those introduced variables is also a local operation. For the first iteration, i.e., $K=1$, in steps $3$ and $4$, DPDA enables all nodes to update their sending data rates in parallel. Each node solely uses immediate information from its neighboring nodes to perform all computations. In step $5$, the link price $\lambda_{b, l}^{k+1},\ l \in \mathcal{L}_b,\  b\in \mathcal{B}$ is updated with new local data rate allocation solution. This step can be performed at both end points that each link connects, which just uses their local information. Step $6$
updates the introduced local variables with the new local data rate allocation solution. The iterative procedure continues until the iterate sequence converges to the optimal solution.

\begin{remark}
It follows from inequalities (\ref{theorem_error_bounds_one}) and (\ref{theorem_error_bounds_two}) that DPDA converges at the rate of $\mathit{O}(1/K)$, where $K$ is the number of iterations.

\end{remark}

\begin{remark}
If the problem (\ref{op_five}) has a unique solution, then the sequence of sample averages converges to that solution.
\end{remark}

\section{SIMULATION RESULTS}
In this section, we present some simulation results which exemplify the behavior of the proposed algorithm, i.e., Algorithm DPDA. The simulations show that the final data rate allocation results in a value of the utility function barely distinguishable from the optimal one.

We consider the network model shown in Fig.~2, where we also show all the links' bandwidths, and source-destination pairs. The network model allows for multiple paths available for flows belonging to each source node. We consider a total of $8$ different combinations of source/destination
nodes. Moreover, we list the prescribed next hops for all forwarding nodes $b_i,\ i=1,\ldots,8$, in Table II. For example, the upper left cell means that node $b_1$ forwards the data of source $s_{1}$ to nodes $b_{2}$ and $b_{7}$.

The objective throughout the simulation is to maximize the sum utility of source nodes, where source $s_i,\ i=1,\ldots,8$, has the utility function given by
\[
{\small
\begin{aligned}
U_{s_i}(r_{s_i})
=&  1.763(r_{s_i})^{1/6}
-20.718(r_{s_i})^{2/6}
+88.568(r_{s_i})^{3/6}\\
&-169.102(r_{s_i})^{4/6}
+145.167(r_{s_i})^{5/6}-44.677(r_{s_i})^{6/6}.
\end{aligned}
}
\]
$U_{s_i}(r_{s_i})$ is
a step-like non-concave polynomial-like function. We consider to optimize a step-like non-concave function, because it is more likely to describe the video quality perceived by a user in a video streaming application \cite{nekouei}.
Moreover, we obtain the resource constraints information from Fig.~2 and Table.~II, and impose the lower and upper bounds on the aggregate data rate of each user as $\xi_{s_i}=0$ and $\zeta_{s_i}=10,\ i=1,\ldots,8$, respectively.

Given the network topology shown in Fig.~2, we choose the step-size parameters to satisfy the convergence condition set forth by Theorem $2$. All step-size parameters are chosen locally using local information.
Fig.~3 shows the performance of Algorithm DPDA for these step-size parameters. It can be seen that the utility function converges to the optimal one, which is obtained by using Genetic Algorithm while assuming the availability of global information. Although all the computations of DPDA are performed locally at each node, it attains almost the same network utility obtained by a centralized optimization algorithm. This implies that the iterate sequence of Algorithm DPDA can indeed converge to the optimal traffic allocation.

Fig.~4 shows the representative data rate trajectories for MRGUBS flows belonging to source nodes $s_3$
and $s_4$. Both data rate sequences are generated by DPDA. It can be seen from Fig.~4 that the MRGUBS requirements are satisfied.

\begin{figure}
\centering
\includegraphics[width=1\columnwidth,height=0.45 \columnwidth]{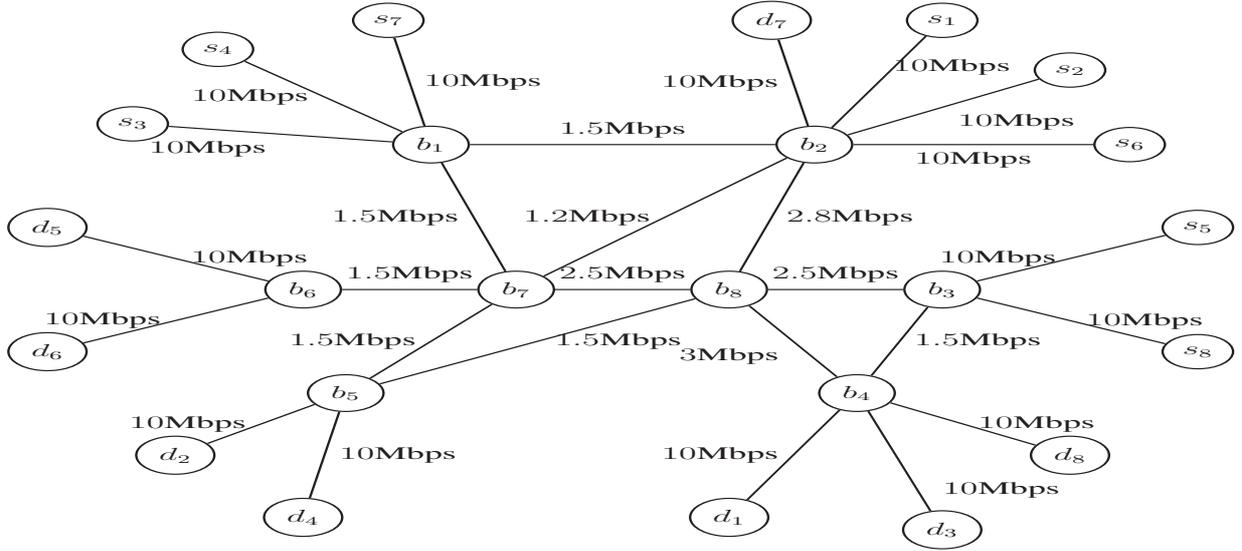} \\
\caption{Topology of the communication network.}
\label{fig_3}
\end{figure}

\begin{table}
\centering
\caption{ROUTING DECISIONS BY SOURCE NODES OF FLOWS}
\label{table:routing}
\resizebox{0.7\textwidth}{!}{
 \begin{tabular}{| c | l | l | l | l | l | l | l | l |}
 \hline
  & $b_1$ & $b_2$ & $b_3$ & $b_4$ & $b_5$ & $b_6$ & $b_7$ & $b_8$\\
 \hline
   $s_1$ & $b_2, b_7$ &$b_7, b_8$  & $b_4$ &$d_1$ & --& --&$b_8$ & $b_3, b_4$\\[2pt]
  \hline
   $s_2$ & $b_2, b_7$ &$b_7, b_8$  & -- &-- & $d_2$& --&$b_5$ & $b_5, b_7$\\[2pt]
 \hline
 $s_3$ & $b_2, b_7$ &$b_7, b_8$  & $b_4$ & $d_3$&-- & --&$b_8$ & $b_3, b_4$\\[2pt]
 \hline
  $s_4$ & $b_2, b_7$ &$b_7, b_8$  &-- & --& $d_4$ & --&$b_5$ & $b_5, b_7$\\[2pt]
 \hline
  $s_5$ & $b_7$ &$b_1,b_7, b_8$  &$b_4, b_8$ & $b_8$& $b_7$ & $d_5$&$b_6$ & $b_5, b_7$\\[2pt]
 \hline
   $s_6$ & $b_7$ &$b_1,b_7, b_8$  &$b_4, b_8$ & $b_8$& $b_7$ & $d_6$&$b_6$ & $b_5, b_7$\\[2pt]
 \hline
   $s_7$ & $b_2,b_7$ &$d_7$ &-- & --& -- &-- &$b_2,b_8$ & $b_2$\\[2pt]
 \hline
 $s_8$& $b_2,b_7$ &$b_7,b_8$ &$b_4$ & $d_8$& -- &-- &$b_8$ & $b_3, b_4$\\[2pt]
 \hline
  \end{tabular}
  }
  \end{table}

\section{CONCLUSIONS AND DIRECTIONS FOR FUTURE RESEARCH}
In this paper, we proposed a distributed traffic allocation algorithm, i.e., DPDA, to allow distributed optimal traffic engineering in a connectionless autonomous network. DPDA is distributed and converges at a $\mathit{O}(1/K)$ rate, where $K$ is the number of iterations. Moreover, numerical simulation results showed that the behavior of DPDA mimics the optimal traffic distribution.

The results presented in this paper are just the first step towards the implementation of an optimal fast distributed algorithm for traffic engineering. There are many issues that need further consideration. In particular, efforts should be put on testing the implementation in large-scale network settings.

\begin{figure}
\centering
\includegraphics[width=1\columnwidth, height=0.3 \columnwidth]{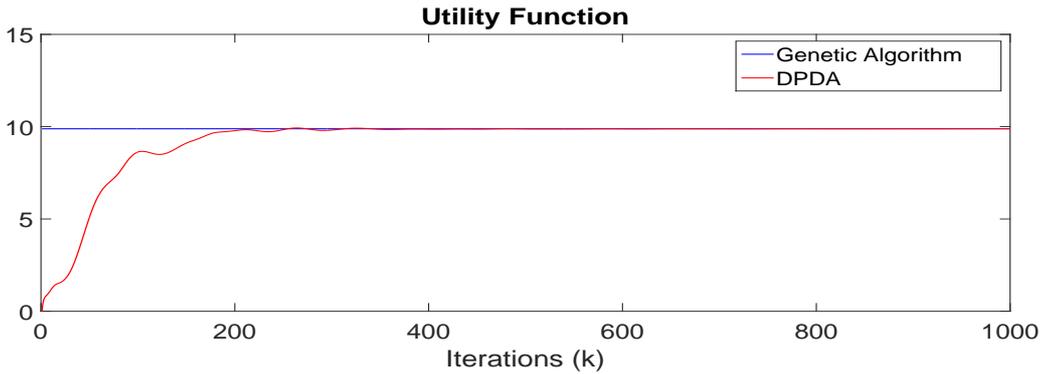} \\
\caption{Value of utility function obtained by DPDA and Genetic Algorithm.}
\label{fig_4}
\end{figure}

\begin{figure}
\centering
\includegraphics[width=1\columnwidth, height=0.3\columnwidth]{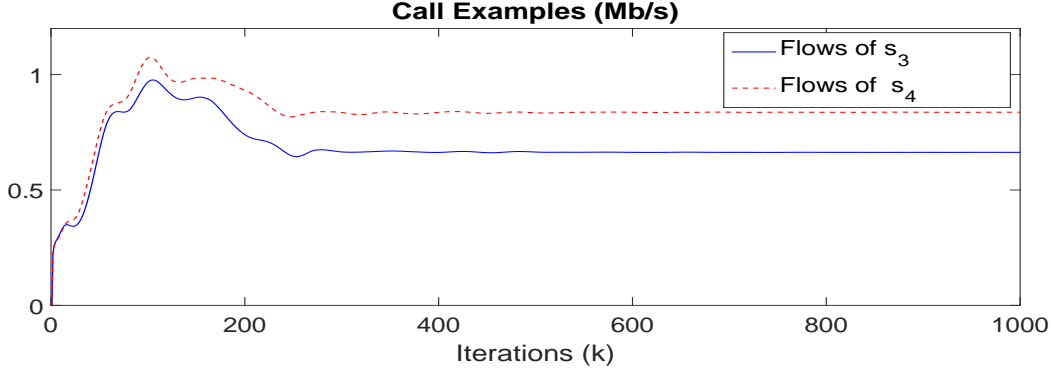} \\
\caption{The data rate trajectory for MRGUBS flows belonging to source nodes $s_3$
and $s_4$.}
\label{fig_5}
\end{figure}

\section*{APPENDIX A. PRELIMINARY RESULTS AND PROOF OF PROPOSITION $1$}
In this Appendix, we include some results from real analysis theory and the main steps of proving Proposition $1$.

\subsection{Preliminary results}
In this subsection, we first recall some results from real analysis theory which are fundamental for the traffic allocation in connectionless networks.
\begin{lemma}\label{Lemma1}
Let $f$ be an arbitrary real-valued function, $\mathcal{F}$ be a compact set, not necessarily convex, and $\mu$ be a probability measure. Then,
\begin{equation}
\underset{x}{\mathrm{inf}} \left\{ f(x) : x \in \mathcal{F} \right\} =
\underset{\mu}{\mathrm{inf}} \left\{ \int f d \mu : \mathrm{supp}(\mu) \subset \mathcal{F} \right\},
\end{equation}
where $\mathrm{supp}(\mu)$ denotes the support of the measure $\mu$.
\end{lemma}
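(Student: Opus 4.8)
## Proof proposal for Lemma 1

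The plan is to prove the two inequalities separately. Write $v^* \triangleq \inf\{f(x): x\in\mathcal F\}$ for the left-hand side and $v_\mu^* \triangleq \inf\{\int f\,d\mu : \mathrm{supp}(\mu)\subset\mathcal F\}$ for the right-hand side, where the infimum on the right is over all Borel probability measures supported on $\mathcal F$.

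For the direction $v_\mu^* \ge v^*$: for any probability measure $\mu$ with $\mathrm{supp}(\mu)\subset\mathcal F$, the integrand satisfies $f(x)\ge v^*$ for $\mu$-almost every $x$ (indeed for every $x\in\mathcal F$), so integrating against the probability measure $\mu$ gives $\int f\,d\mu \ge v^*\int d\mu = v^*$. Taking the infimum over all such $\mu$ yields $v_\mu^*\ge v^*$. This direction needs no compactness.

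For the reverse direction $v_\mu^* \le v^*$: the key observation is that point masses are admissible measures. For any $x\in\mathcal F$, the Dirac measure $\delta_x$ is a probability measure with $\mathrm{supp}(\delta_x)=\{x\}\subset\mathcal F$, and $\int f\,d\delta_x = f(x)$. Hence $v_\mu^* \le f(x)$ for every $x\in\mathcal F$, and taking the infimum over $x\in\mathcal F$ gives $v_\mu^* \le v^*$. Combining the two inequalities gives equality. I would remark that compactness of $\mathcal F$ (and, if one wants the infima to be attained, some regularity of $f$) is not actually needed for the stated identity of infima; it is the hypothesis under which the subsequent moment-based relaxation in the proof of Proposition~1 is carried out, since compactness of the feasible set is what makes the truncated moment problem well behaved.

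The main (and essentially only) subtlety is being careful about measurability: one should note that on the right-hand side the infimum ranges over probability measures for which $f$ is integrable (or interpret $\int f\,d\mu$ in the extended sense), so that both sides are well-defined elements of $[-\infty,+\infty]$; with that convention the Dirac-measure argument goes through verbatim and no genuine obstacle remains.
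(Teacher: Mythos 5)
Your proof is correct and follows essentially the same two-inequality strategy as the paper: integrate the pointwise bound $f\geq v^{*}$ against $\mu$ for one direction, and use Dirac measures for the other. The only difference is a minor refinement — the paper assumes a minimizer $x^{\star}$ exists (implicitly relying on compactness and regularity of $f$ that the statement does not actually guarantee), whereas you work directly with infima and Dirac masses at arbitrary points, which is slightly more careful but not a different argument.
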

\begin{proof}
For the sake of completeness, we briefly mention the main steps of this well-known fact. Let $x^{\star} \in \mathcal{F}$ be a minimizer of $f$ such that $f(x)\geq f(x^{\star})$ for every $x \in \mathcal{F}$. Then, we have $ \int f d \mu\geq f(x^{\star})$ hold for every probability measure $\mu$ with $\mathrm{supp}(\mu) \subset \mathcal{F}$. That is to say, we have the following inequality hold
\begin{equation}\label{appendix_lemma_condition_one}
\underset{x}{\mathrm{inf}} \left\{ f(x) : x \in \mathcal{F} \right\} \leq
\underset{\mu}{\mathrm{inf}} \left\{ \int f d \mu : \mathrm{supp}(\mu) \subset \mathcal{F} \right\}.
\end{equation}

On the other hand, we have $\int f d \delta_{x^{\star}}= f(x^{\star})$, where $\delta_{x^{\star}}$ is the Dirac measure of $x^{\star}$ on the set $ \mathcal{F}$. Since $\delta_{x^{\star}}$ is a particular probability measure with $\mathrm{supp}(\delta_{x^{\star}}) \subset \mathcal{F}$ and $\int f d \delta_{x^{\star}}= f(x^{\star})$, we have 
\begin{equation}\label{appendix_lemma_condition_two}
\underset{x}{\mathrm{inf}} \left\{ f(x) : x \in \mathcal{F} \right\} \geq
\underset{\mu}{\mathrm{inf}} \left\{ \int f d \mu : \mathrm{supp}(\mu) \subset \mathcal{F} \right\}.
\end{equation}

In conclusion, the result of Lemma\ref{Lemma1} is established by (\ref{appendix_lemma_condition_one}) and (\ref{appendix_lemma_condition_two}).
\end{proof}

We proceed with the following theorem \cite{ozay} that provides necessary and sufficient conditions for the existence of Borel measures whose support is included in bounded symmetric intervals of the real line.

\begin{theorem}\label{Theorem1}
Given a sequence $\mathbf{t}\triangleq\{t_{j}\}_{j=1}^{\ell}$ and a scalar $\epsilon>0$, there exists a Borel measure $\mu(.)$ with support contained in $\mathcal{Y}\doteq [-\epsilon,\epsilon]$ such that $\mu(\mathcal{Y})=1$ and
$t_{j}=E_{\mu}[y^j]=\int_{\mathcal{Y}} y^{j} \mu(dy)$ is true if and only if
\begin{itemize}
\item when $\ell=2k+1$ (odd case), the following holds
\begin{align}
&\epsilon\mathbf{M}(0,2k,\mathbf{t}) \succeq \mathbf{M}(1,2k+1,\mathbf{t})  \\
& \mathbf{M}(1,2k+1,\mathbf{t}) \succeq -\epsilon \mathbf{M}(0,2k,\mathbf{t}),
\end{align}

\item when $\ell=2k$ (even case), the following holds
\begin{align}
&\mathbf{M}(0,2k,\mathbf{t}) \succeq 0  \\
&\epsilon^{2} \mathbf{M}(0,2k-2,\mathbf{t}) \succeq \mathbf{M}(2,2k,\mathbf{t}),
\end{align}
\end{itemize}
where $\mathbf{M}(k,k+2h,\mathbf{t}) \in \mathbb{R}^{(h+1) \times (h+1)}$ is a Hankel matrix of the form
\begin{equation}\label{hankel_matrix}
\mathbf{M}(k,k+2h,\mathbf{t}) =
\left[\begin{smallmatrix}
t_{k} & t_{k+1} & \ldots &  t_{k+h}\\
t_{k+1} &  \reflectbox{$\ddots$}  &  \reflectbox{$\ddots$}   & t_{k+h+1}\\
\vdots &   \reflectbox{$\ddots$}   & \reflectbox{$\ddots$}   &  \vdots\\
t_{k+h} & \ldots &\ldots & t_{k+2h}
\end{smallmatrix}\right].
\end{equation}
and $t_{0}=1$.
\end{theorem}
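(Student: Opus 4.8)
The plan is to recognize Theorem~\ref{Theorem1} as the solution of the truncated Hausdorff moment problem on the symmetric interval $\mathcal{Y} = [-\epsilon,\epsilon]$ and to prove the two implications separately. The necessity direction is routine: given a probability measure $\mu$ with $\mathrm{supp}(\mu) \subset \mathcal{Y}$ and $t_j = \int_{\mathcal{Y}} y^j \mu(dy)$, one tests against squares. In the even case $\ell = 2k$, for any $a = (a_0,\ldots,a_k)^T$ the polynomial $p(y) = \sum_{i=0}^k a_i y^i$ satisfies $0 \le \int_{\mathcal{Y}} p(y)^2 \mu(dy) = a^T \mathbf{M}(0,2k,\mathbf{t}) a$, giving $\mathbf{M}(0,2k,\mathbf{t}) \succeq 0$; and since $\epsilon^2 - y^2 \ge 0$ on $\mathcal{Y}$, for any $b = (b_0,\ldots,b_{k-1})^T$ and $q(y) = \sum_{i=0}^{k-1} b_i y^i$ one gets $0 \le \int_{\mathcal{Y}} (\epsilon^2 - y^2) q(y)^2 \mu(dy) = b^T(\epsilon^2 \mathbf{M}(0,2k-2,\mathbf{t}) - \mathbf{M}(2,2k,\mathbf{t})) b$. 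The odd case $\ell = 2k+1$ is identical, testing instead against $(\epsilon - y) q(y)^2$ and $(\epsilon + y) q(y)^2$, whose integrands are both nonnegative on $\mathcal{Y}$.

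For the sufficiency direction I would argue by duality via the Riesz extension theorem. Define the linear functional $L$ on the space $\mathcal{P}_\ell$ of univariate polynomials of degree at most $\ell$ by $L(y^j) = t_j$ for $j = 0,\ldots,\ell$ (recall $t_0 = 1$). It suffices to show $L(g) \ge 0$ for every $g \in \mathcal{P}_\ell$ that is nonnegative on $\mathcal{Y}$: once this holds, $\mathcal{P}_\ell|_{\mathcal{Y}}$ is cofinal in $C(\mathcal{Y})$ with respect to the cone of nonnegative functions (any $f \in C(\mathcal{Y})$ equals a constant plus a nonnegative function), so by the Riesz extension theorem $L$ extends to a positive linear functional on $C(\mathcal{Y})$, and the Riesz representation theorem produces a Borel measure $\mu$ on $\mathcal{Y}$ with $L(f) = \int_{\mathcal{Y}} f\, d\mu$; taking $f \equiv 1$ gives $\mu(\mathcal{Y}) = t_0 = 1$ and $f = y^j$ recovers $t_j = \int_{\mathcal{Y}} y^j \mu(dy)$.

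The crux is therefore establishing $L(g) \ge 0$ for $g \in \mathcal{P}_\ell$ nonnegative on $[-\epsilon,\epsilon]$, and this is where the Hankel conditions enter, through a Markov--Luk\'acs type sum-of-squares representation of one-variable polynomials nonnegative on an interval. In the even case, such a $g \in \mathcal{P}_{2k}$ can be written $g(y) = \sigma_0(y) + (\epsilon^2 - y^2)\sigma_1(y)$ with $\sigma_0$ a sum of squares of polynomials of degree $\le k$ and $\sigma_1$ a sum of squares of polynomials of degree $\le k-1$; writing $\sigma_0 = \sum_r p_r^2$, $\sigma_1 = \sum_r q_r^2$ and expanding in the monomial basis, $L(\sigma_0) = \sum_r a_r^T \mathbf{M}(0,2k,\mathbf{t}) a_r \ge 0$ by the first hypothesis and $L((\epsilon^2 - y^2)\sigma_1) = \sum_r b_r^T(\epsilon^2 \mathbf{M}(0,2k-2,\mathbf{t}) - \mathbf{M}(2,2k,\mathbf{t})) b_r \ge 0$ by the second, hence $L(g) \ge 0$. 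The odd case uses $g(y) = (\epsilon - y)\sigma_1(y) + (\epsilon + y)\sigma_2(y)$ with $\sigma_1, \sigma_2$ sums of squares of degree $\le k$, and the two displayed inequalities of the odd case give $L((\epsilon - y)\sigma_1) \ge 0$ and $L((\epsilon + y)\sigma_2) \ge 0$, respectively; note that in this case $\mathbf{M}(0,2k,\mathbf{t}) \succeq 0$ is not assumed but follows by adding the two hypotheses and using $\epsilon > 0$.

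The main obstacle is assembling this cleanly rather than any single hard estimate: the Markov--Luk\'acs decomposition with the sharp degree bounds must be invoked correctly in each parity, and the extension step must be set up so that the support of the representing measure is genuinely confined to the compact set $\mathcal{Y}$ (which is automatic once $L$ is shown nonnegative on polynomials nonnegative on $\mathcal{Y}$, but this is precisely the step that fails without the localizing Hankel conditions). An alternative to the duality argument is a constructive Gaussian-quadrature/flat-extension proof that builds a finitely atomic $\mu$ from the eigenstructure of $\mathbf{M}(0,2k,\mathbf{t})$ and then uses the localizing matrices to certify that all nodes lie in $[-\epsilon,\epsilon]$; I would keep the duality version as the primary route since it avoids the rank-deficiency case analysis.
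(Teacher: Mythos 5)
Your proposal is correct, and it is genuinely a proof, whereas the paper's own ``proof'' of this statement is a one-line citation to Theorems III.2.3 and III.2.4 of Krein and Nudelman, i.e., the classical solution of the truncated Markov moment problem developed there via canonical representations and quadrature-type arguments. Your route is the standard modern alternative: necessity by testing the Hankel and localizing matrices against squares $p^2$ and against $(\epsilon^2-y^2)q^2$ (resp.\ $(\epsilon\mp y)q^2$ in the odd case), and sufficiency by the Riesz--Haviland duality scheme --- positivity of the Riesz functional $L$ on polynomials nonnegative on $[-\epsilon,\epsilon]$, established through the Markov--Luk\'acs representation $g=\sigma_0+(\epsilon^2-y^2)\sigma_1$ (even) or $g=(\epsilon-y)\sigma_1+(\epsilon+y)\sigma_2$ (odd) with the sharp degree bounds, followed by the M.~Riesz extension theorem (the cone condition $C(\mathcal{Y})=\mathcal{P}_\ell+\{f\ge 0\}$ holds since constants lie in $\mathcal{P}_\ell$) and the Riesz representation theorem on the compact set $\mathcal{Y}$. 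Your side remarks are also sound: in the odd case $\mathbf{M}(0,2k,\mathbf{t})\succeq 0$ indeed follows by summing the two stated inequalities and dividing by $2\epsilon>0$, and choosing the duality route over a constructive flat-extension/quadrature argument legitimately sidesteps the rank-deficiency case analysis that the Krein--Nudelman treatment has to confront. What the citation buys the paper is brevity and slightly more information (Krein--Nudelman also characterize when the representing measure is unique or finitely atomic); what your argument buys is self-containedness and a template that generalizes directly to other semialgebraic supports via Positivstellens\"atze. I see no gap.
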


\begin{proof}
Direct application of Theorem $\mathrm{III}.2.3$ and Theorem $\mathrm{III}.2.4$ in \cite{krein}.
\end{proof}

\subsection{Proof of Proposition $1$}
\begin{proof}

We note that the problem can be converted into a  polynomial optimization form with a change of variables $y_{s}=(r_{s})^{(1/\ell)}$.
The equivalent problem is stated as follows.
\begin{equation}\label{op_two}
{\small
\begin{aligned}
&\underset{\mathbf{x}}{\text{maximize}}
& & \sum\limits_{s\in \mathcal{S}} \sum\limits_{j=0}^{\ell} p_{s,j}(y_{s})^{j}\\
&\text{subject\ to} & & y_{s} \leq (r_{s})^{(1/\ell)},\  s \in \mathcal{S},\\
& & &\sum\limits_{i \in \mathcal{I}_{b,l}^{\text{out}}} x_{i,b,l}^{\text{out}}+\sum\limits_{i \in \mathcal{I}_{b,l}^{\text{in}}}x_{i,e_l(b),l}^{\text{out}} \leq c_l,\ l \in \mathcal{L}_b,\ b \in \mathcal{B},\\
& & & \sum\limits_{l \in \mathcal{L}_{b,i}^{\text{out}}} x_{i,b,l}^{\text{out}}- \sum\limits_{\tilde{l} \in \mathcal{L}_{b}}x_{i,e_{\tilde{l}}(b),\tilde{l}}^{\text{out}}=0,\ i \in \mathcal{I}_b,\  b \in \mathcal{B},\\
& & & ( \mathbf{x}_{s}^{\text{out}},\ r_{s})\in \mathcal{X}_{s},\  s \in \mathcal{S},\\
& & & x_{i,b,l}^{\text{out}}\geq 0,\ i\in \mathcal{I}_{b,l}^{\text{out}},\ l\in \mathcal{L}_b,\  b\in \mathcal{B}.
\end{aligned}
}
\end{equation}
Note that the feasible set in (\ref{op_two}) is convex.
However, the equivalent problem is still a non-convex problem, because of the non-concavity of the utility function. Then, instead of working with $y_{s}$, we optimize over moments of probability distributions in the space of $y_{s}$. More precisely, suppose $y_{s}$ is a random variable and we denote by $m_{s,j}$ the $j$-th moment of $y_{s}$ for some probability measure $\mu$, i.e., $m_{s,j}=E_{\mu}[y_{s}^j]$.

Now, we consider transforming problem (\ref{op_two}) into an optimization problem over the space of probability measures of $y_{s}$ with a support contained in the feasible set of
(\ref{op_two}). 
\begin{enumerate}
  \item Based on Lemma $1$, the objective function becomes
\begin{equation}
{\small
\int \displaystyle \sum\limits_{s\in \mathcal{S}} \sum\limits_{j=0}^{\ell} p_{s,j}(y_{s})^{j} d \mu_{i}
= \sum_{s \in \mathcal{S}} \mathbf{p}_{s}^{T} \mathbf{m}_{s}.
}
\end{equation}
  \item  The first three constraints in (\ref{op_five}) are justified by Theorem $1$.
  \item We use the set of constraints
\begin{equation}\label{approx_constraints}
{\small
m_{s,j} \leq r_{s}^{j / \ell}, ~ j \in \{1,\hdots,\alpha\}
}
\end{equation}
to approximate the constraint $y_{s} \leq (r_{s})^{(1/\ell)}$.
  \item The left hand of each constraint $\sum\limits_{i \in \mathcal{I}_{b,l}^{\text{out}}} x_{i,b,l}^{\text{out}}+\sum\limits_{i \in \mathcal{I}_{b,l}^{\text{in}}}x_{i,e_l(b),l}^{\text{out}} \leq c_l $ for $l\in \mathcal{L}_b,\ b\in \mathcal{B}$,
is written as $\mathbf{1}_{b,l} \mathbf{x}_{b,l}^{\text{out}}+\delta_{b,l}  \mathbf{x}_{e_l(b),l}^{\text{out}}$. In a similar way,
we rewrite constrains $\sum\limits_{l \in \mathcal{L}_{b,i}^{\text{out}}} x_{i,b,l}^{\text{out}}- \sum\limits_{\tilde{l} \in \mathcal{L}_{b,i}^{\text{out}}}x_{i,e_{\tilde{l}}(b),\tilde{l}}^{\text{out}}=0,\ i \in \mathcal{I}_b,\  b \in \mathcal{B}$ in a matrix form, i.e., $B\mathbf{x}=\mathbf{0}$.
\end{enumerate}

In conclusion, Lemma $1$, Theorem $1$ and (\ref{approx_constraints}) establish the result of Proposition $1$.
\end{proof}

\section*{APPENDIX B. DERIVATION OF DPDA}
The constrains set of convex relaxation (\ref{op_five}) consists of local constraints, e.g., capacity constraints and global constraints, e.g., flow conservation constraints through nodes. The existence of global constraints renders difficulty for us to solve problem (\ref{op_five}) in a distributed fashion. However, 
the primal-dual method, proposed by Chambolle and Pock in \cite{Chambolle_mathe_2015} for solving convex-concave saddle point problems makes it possible. This algorithm can be adapted to solve the multi-agent consensus optimization problem as discussed in \cite{Aybat_arxiv_2016}. We also use the distributed primal-dual algorithm in \cite{Aybat_arxiv_2016} to solve our traffic allocation problem (\ref{op_five}). This Appendix aims at developing the distributed algorithm that converges to the solution of (\ref{op_five}).

The optimization problem (\ref{op_five}) can be compactly stated as
\begin{equation}\label{op_four}
{\small
\begin{aligned}
& \underset{\mathbf{x}}{\text{maximize}} & & \sum\limits_{s \in \mathcal{S}} \mathbf{p}_{s}^T\mathbf{m}_{s}\\
&\text{subject\ to}
& & \mathbf{1}_{b,l} \mathbf{x}_{b,l}^{\text{out}}+\delta_{b,l} \mathbf{x}_{e_l(b),l}^{\text{out}}-c_l\leq 0,\ l\in \mathcal{L}_{b},\ b \in \mathcal{B}\\
& & &B\mathbf{x}=0,\\
& & &(\mathbf{x}_{s}^{\text{out}},\ \mathbf{m}_{s},\ r_{s})\in \mathcal{A}_{s},\  s\in \mathcal{S},\\
& & &\mathbf{x}_{b,l}^{\text{out}}\succeq 0,\ l\in \mathcal{L}_b,\  b\in \mathcal{B},
\end{aligned}
}
\end{equation}
where $\mathcal{A}_{s}$ are the set of local constraints for each source node $s \in \mathcal{S}$, as defined in (\ref{set_s}).

We introduce the convex-concave saddle-point form of the primal problem (\ref{op_four}),
\begin{equation}\label{saddle_point_problem}
\underset{\mathbf{x}}{\min} \underset{\lambda,\ \theta}{\max}L(\mathbf{x},\ \lambda,\ \theta),
\end{equation}
 where $L(\mathbf{x},\ \lambda,\ \theta)$ is the Lagrangian function given by
\begin{equation}\label{lag_func}
{\small
\begin{aligned}
L(\mathbf{x},\ \lambda,\ \theta)
=&-\sum\limits_{s \in \mathcal{S}} (\mathbf{p}_{s}^T\mathbf{m}_{s}
-I_{\mathcal{A}_{s}}(\mathbf{x}_{s}^{\text{out}},\ \mathbf{m}_{s},\ r_{s})  )\\
&+\sum\limits_{b\in \mathcal{B}} \sum\limits_{l\in \mathcal{L}_b} I_{\mathrm{R}_{+}^{|\mathcal{I}_{b,l}^{\text{out}}|}}(\mathbf{x}_{b,l}^{\text{out}})-\sum\limits_{b\in \mathcal{B}} \sum\limits_{l\in \mathcal{L}_b} I_{{{R}_{+}}}(\lambda_{b,l})
\\
&+\sum\limits_{b\in \mathcal{B}} \sum\limits_{l\in \mathcal{L}_b}\fprod{\mathbf{1}_{b,l} \mathbf{x}_{b,l}^{\text{out}}+\delta_{b,l}\mathbf{x}_{e_l(b),l}^{\text{out}} - c_l,\ \lambda_{b,l}}\\
&+\fprod{B\mathbf{x},\ \theta}.
\end{aligned}
}
\end{equation}
$\theta\in\mathrm{R}^{ \sum\limits_{b\in \mathcal{B}} |I_{b}| }$ is the vector of dual variables associated with the flow conservation constraint at nodes $B\mathbf{x}=0$. Given $l\in \mathcal{L}_b$ and $b\in \mathcal{B}$, the dual variable $\lambda_{b,l}$ is introduced for the capacity inequality constrains $\mathbf{1}_{b,l} \mathbf{x}_{b,l}^{\text{out}}+\delta_{b,l}\mathbf{x}_{e_l(b),l}^{\text{out}} \leq c_l$. Moreover, $\lambda=[\lambda_{b,l}]_{l\in \mathcal{L}_b,b\in \mathcal{B}}$.

Now, given the initial iterates $\mathbf{x}^0$, $\lambda^0$, $\theta^0$ and parameters
$\gamma>0$, $\tau_{s}>0$ for all $s\in\mathcal{S}$, $\tau_{i,b,l}>0$, $\kappa_{b,l}>0$ for all $i\in \mathcal{I}_{b,l}^{\text{out}}$, $l\in \mathcal{L}_b$ and $b\in \mathcal{B}$,
we present the following primal-dual iterations to solve (\ref{saddle_point_problem}):
\begin{equation}\label{x_update_one}
{\small
\begin{aligned}
\mathbf{x}^{k+1}\leftarrow &  \underset{\mathbf{x}}{\text{argmin}}-\sum\limits_{s \in \mathcal{S}} (\mathbf{p}_s^T\mathbf{m}_{s}-I_{\mathcal{A}_{s}}(\mathbf{x}_{s}^{\text{out}},\ \mathbf{m}_{s},\ r_{s})  )\\
&+\sum\limits_{b\in \mathcal{B}} \sum\limits_{l\in \mathcal{L}_b} I_{\mathrm{R}_{+}^{|\mathcal{I}_{b,l}^{\text{out}}|}}(\mathbf{x}_{b,l}^{\text{out}})
\\
&+\sum\limits_{b\in \mathcal{B}} \sum\limits_{l\in \mathcal{L}_b}\fprod{\mathbf{1}_{b,l} \mathbf{x}_{b,l}^{\text{out}}+\delta_{b,l}\mathbf{x}_{e_l(b),l}^{\text{out}} - c_l,\ \lambda_{b,l}^k}\\
&+\fprod{B\mathbf{x},\ \theta^k}+\sum\limits_{b\in \mathcal{B}} \sum\limits_{l\in \mathcal{L}_b} \sum\limits_{i\in \mathcal{I}_{b,l}^{\text{out}}}  \frac{1}{2\tau_{i,b,l}}(x_{i,b,l}^{\text{out}}-x_{i,b,l}^{\text{out},k})^2\\
& + \sum\limits_{s \in \mathcal{S}}\frac{1}{2\tau_{s}}( (x_{s,l}^{\text{out}}- x_{s,l}^{\text{out},k})^2 + \|\mathbf{m}_{s}-\mathbf{m}_{s}^k\|_2^2 +  (r_{s}-r_{s}^k)_2^2   )
;\\
\lambda_{b, l}^{k+1} \leftarrow & \underset{\lambda_{b, l}}{\text{argmax}} -  I_{{\mathrm{R}_{+}}}(\lambda_{b,l})+\langle \mathbf{1}_{b,l}(2 \mathbf{x}_{b,l}^{\text{out}, k+1}-\mathbf{x}_{b,l}^{\text{out}, k})\\
&+
\delta_{b,l}(2 \mathbf{x}_{e_l(b),l}^{\text{out}, k+1}-\mathbf{x}_{e_l(b),l}^{\text{out}, k}) - c_l,\ \lambda_{b, l}\rangle\\
&-\frac{1}{2\kappa_{b, l}}(\lambda_{b, l}-\lambda_{b, l}^k)^2,\  l\in\mathcal{L}_b,\   b\in \mathcal{B};\\
\theta^{k+1} \leftarrow & \underset{\theta}{\text{argmax}} \fprod{B(2\mathbf{x}^{k+1}-\mathbf{x}^k),\ \theta}-\frac{1}{2\gamma} \|\theta-\theta^k\|_2^2\\
& =\theta^k+\gamma B (2\mathbf{x}^{k+1}-\mathbf{x}^k).
\end{aligned}
}
\end{equation}

Although the convergence to the optimal traffic allocation is guaranteed under the primal-dual method, it is still not a distributed algorithm. In fact, solving the optimization problem involved in the primal variables $\mathbf{x}^{k+1}$ update rule requires global information about the network due to the presence of the term  $\fprod{B\mathbf{x},\ \theta^k}$, which is associated with the flow conservation constraints at nodes. Moreover, computing the term $\fprod{\mathbf{1}_{b,l} \mathbf{x}_{b,l}^{\text{out}}+\delta_{b,l}\mathbf{x}_{e_l(b),l}^{\text{out}} - c_l,\ \lambda_{b,l}^k},\  l\in\mathcal{L}_b,\ b\in \mathcal{B}$ forces neighboring nodes to exchange information, because bidirectional links are allowed to exist in the model. This fact hinder us from directly implementing the primal-dual iterations. Nevertheless, we exploit the structure of the inner product $\fprod{B\mathbf{x},\ \theta^k}$ and note that this term is a summation of local linear functions of the local variables. In addition, the sending data rates of neighboring nodes is local information. These observations indicates that it is possible to develop an optimal decentralized traffic allocation algorithm.

Using recursion in $\theta$ update rule in (\ref{x_update_one}), we can write $\theta^{k+1}$ as a partial summation of previous primal variable $\mathbf{x}^k$ iterations, i.e., $\theta^k=\theta^0+\gamma\sum\limits_{n=0}^{k-1} B (2\mathbf{x}^{n+1}-\mathbf{x}^n)$.
Let $\theta^0$ be
$\gamma B \mathbf{x}^{\text{out},0}$, $\mathbf{z}^0$ be $\mathbf{x}^{0}$ and $\mathbf{z}^k \triangleq \mathbf{x}^{k}+\sum\limits_{n=1}^k\mathbf{x}^{n}$ for $k\geq 1$.
Then we get
\begin{equation}\label{x_update_term}
{\small
\begin{aligned}
\fprod{B\mathbf{x},\ \theta^k}
= &\gamma \fprod{ \mathbf{x}^{\text{out}},{B}^T {B} \mathbf{z}^k}\\
=&\gamma \sum\limits_{b\in \mathcal{N}} \sum\limits_{l \in \mathcal{L}_b} \sum \limits_{i \in I_{b,l}^{\text{out}}} x_{i, b, l}^{\text{out}}( \sum \limits_{\tilde{l}\in \mathcal{L}_{b,i}^{\text{out}}} z_{i, b, \tilde{l}}^k-  \sum\limits_{\bar{l} \in \mathcal{L}_{b}} z_{i, e_{\bar{l}}(b), \bar{l}}^k).
\end{aligned}
}
\end{equation}

The quadratic operation for updating $\lambda_{b,l}^{k+1}$ in (\ref{x_update_one}) entails solving the following projection
problem:
\begin{equation}\label{lambda_update_term}
\lambda_{b,l}^{k+1}\leftarrow \mathit{P}_{\mathrm{R}_+}\bigg(\lambda_{b,l}^k+ \mathbf{1}_{b,l}(2 \mathbf{x}_{b,l}^{\text{out},\ k+1}-\mathbf{x}_{b,l}^{\text{out},\ k})
+\delta_{b,l}(2 \mathbf{x}_{e_l(b),l}^{\text{out},\ k+1}-\mathbf{x}_{e_l(b),l}^{\text{out},\ k}) - c_l\bigg).
\end{equation}

Substituting (\ref{x_update_term}) and (\ref{lambda_update_term}) into (\ref{x_update_one}) yields a distributed traffic allocation algorithm shown in Algorithm $1$.

\section*{APPENDIX C. PROOF OF THEOREM $2$}
In this section, we present the Proof of Theorem $2$.

\begin{proof}

Due to space limitations, we only prove that if conditions (\ref{condition_one}) and (\ref{condition_two}) hold, the following inequality is true:
\begin{equation}
Q(A,B)\triangleq \left[ \begin{smallmatrix}
D_{\tau} & -A^T & -B^T\\
-A & D_{\kappa} & \mathbf{0}\\
-B & \mathbf{0} & D_{\gamma}
\end{smallmatrix}\right] \succeq 0
\end{equation}
where
$D_{\kappa} \triangleq \text{diag}([\frac{1}{\kappa_{b,\ l}}]_{ l\in \mathcal{L}_b,\ b\in \mathcal{B}})$, $D_{\gamma} \triangleq \frac{1}{\gamma} I_{\sum \limits_{b\in \mathcal{N}} |I_b|}$, and
$D_{\tau} \triangleq  \text{diag}([v_{s\tau}^T, v_{b\tau}^T]^T )$ where $v_{s\tau}\triangleq [\frac{1}{\tau_{s}} \mathbf{1}_{(|\mathcal{L}_{s}| +\ell+2)\times 1}]_{s\in \mathcal{S}}$ and $v_{b\tau}=[\frac{1}{\tau_{i,b,l}}]_{ i\in\mathcal{I}_{b,l}^{\text{out}},\ l\in \mathcal{L}_b, b\in\mathcal{B}}$. Moreover,
$A\triangleq \mathrm{diag}([A_{l}]_{l\in \mathcal{L}_b,\ b\in \mathcal{B}})$, where $A_{ l}$ is a row vector with the same dimension as vector variable $ \mathbf{x}$, and the $i$-th entry of vector $A_{ l}$, equals to $1$ if the data rate denoted by the $i$-th element is transported through link $l$, $0$ otherwise.

Based on ``Schur complement Lemma", we have ${Q}(A,B) \succeq 0$ holds if and only if
\begin{equation}\label{schur_equivalent_condition}
\left[\begin{smallmatrix}
D_{\tau} & -A^T\\
-A & D_{\kappa}
\end{smallmatrix}\right]
-\gamma
\left[\begin{smallmatrix}
B^TB & \mathbf{0}\\
\mathbf{0} & \mathbf{0}
\end{smallmatrix}\right]\succeq 0.
\end{equation}
Moreover, since $D_{\kappa} \succeq 0$, again using ``Schur complement Lemma", one can conclude that (\ref{schur_equivalent_condition}) holds if and only if
\begin{equation}
D_{\tau}- \gamma B^TB- A^TD_{\kappa}^{-1}A \succeq 0.
\end{equation}
 Denote matrix $B^TB$ by $\Omega$, and we can write
$\Omega$ into the sum of two matrices, i.e.,
\begin{equation}
\Omega=\text{diag}([\omega_{i,b,l}]_{i \in\mathcal{I}_{b,l}^{\text{out}},\ l\in \mathcal{L}_b,\ b\in \mathcal{N}})+E,
\end{equation}
where $\omega_{i,b,l}=1$ if node $b\in \mathcal{S} \bigcup \mathcal{B}_{e}$ where $\mathcal{B}_e$ is the set of nodes that forward traffic to destination nodes, $\omega_{i,b,l}=2$ if $b\in \mathcal{B} \bigcap \mathcal{B}_{e}^c$ where $\mathcal{B}_{e}^c$ is the complement of $\mathcal{B}_{e}$, otherwise $0$. Also, all the diagonal elements of matrix $E$ are equal to $0$, and the  non-diagonal element $E_{(i,b_1,l_1),(j,b_2,l_2)}$, corresponding to data rates $x_{i,b_1,l_1}^\textbf{{out}} \in \mathbf{x}$ and $x_{i,b_2,l_2}^\textbf{{out}} \in \mathbf{x}$, equals to $1$ if both data rates belong to the same source node and they are forwarded from the same node, i.e., $i=j\in\mathcal{I}$ and $b_1=b_2\in\mathcal{N}$, $-1$ if both data rates belong to the same source node and nodes $b_1$ and $b_2$ are neighboring, and $0$ otherwise. Based on ``Gershgorin Circle Theorem" \cite{Golub:1996:MC:248979}, we have
\begin{equation}
\text{diag}([\omega_{i,b,l}]_{ i \in\mathcal{I}_{b,l}^{\text{out}},\ l\in \mathcal{L}_b,\ b\in \mathcal{N}})+
\text{diag}([d_{i,b,l}]_{ i \in\mathcal{I}_{b,l}^{\text{out}},\ l\in \mathcal{L}_b,\ b\in \mathcal{N}})
 -E \succeq 0,
\end{equation} since $d_{i,b,l}$ is chosen to be large enough. Therefore,
\begin{equation}
\Omega \preceq  2\text{diag}([\omega_{i,b,l}]_{ i \in\mathcal{I}_{b,l}^{\text{out}},\ l\in \mathcal{L}_b,\ b\in \mathcal{N}})+
\text{diag}([d_{i,b,l}]_{ i \in\mathcal{I}_{b,l}^{\text{out}},\ l\in \mathcal{L}_b,\ b\in \mathcal{N}})
\end{equation}
Moreover,
\begin{equation}
\Omega \preceq \text{diag}([4+d_{i,b,l}]_{ i \in\mathcal{I}_{b,l}^{\text{out}},\ l\in \mathcal{L}_b,\ b\in \mathcal{N}}).
\end{equation}
Hence, it is sufficient to have
\begin{equation}
{\tau}- \gamma
\text{diag}([4+d_{i,b,l}]_{ i \in\mathcal{I}_{b,l}^{\text{out}},\ l\in \mathcal{L}_b,\ b\in \mathcal{N}}) - A^TD_{\kappa}^{-1}A \succeq 0 ,
\end{equation}
 and this condition holds if the inequalities (\ref{condition_one}) and (\ref{condition_two}) in the statement of Theorem $1$ are true.

Let $(\mathbf{x}^{\star},\mathbf{\lambda}^{\star},\mathbf{\theta}^{\star})$ be an arbitrary saddle-point for the Lagrange function of problem (\ref{op_five}), and
$\{\mathbf{x}^{k}\}_{k\geq 0}$ be the iterate sequence generated using Algorithm DPDA, initialized from an arbitrary $\mathbf{x}^0$ and $[\lambda_{b,l}^0]_{l\in\mathcal{L}_b,b\in \mathcal{B}}=\mathbf{0}$.
Denote the average of sending data rates
by $\bar{\mathbf{x}}^{K}\triangleq \frac{1}{K} \sum\limits_{k=1}^K \mathbf{x}^k$, where $K\geq 1$.
Then, following the proof in \cite{Aybat_arxiv_2016}, we have that $\{\bar{\mathbf{x}}^{K}\}$ converges to the maximum of the utility function of the problem (\ref{op_five}) subject to the resource allocation constraints. In particular, the following error bounds hold for all $K\geq 1$:
\begin{equation}\label{theorem_error_bounds_three}
\begin{aligned}
\|\mathbf{\theta}^{\star}\| \|B\bar{\mathbf{x}}^{K} \|+ & \sum\limits_{b\in\mathcal{B}}\sum \limits_{l\in\mathcal{L}_b} \| \mathbf{\lambda}^{\star}_{b,l} \| h(\bar{\mathbf{x}}_{b,l}^{\text{out}},\bar{\mathbf{x}}_{e_l(b),l}^{\text{out}})
\leq \frac{\Theta_1}{K},\\
&|  \sum\limits_{s\in\mathcal{S}}\mathbf{p}_s^T(\bar{\mathbf{m}}_s-\mathbf{m}_s^{\star})|
\leq  \frac{\Theta_1}{K},
\end{aligned}
\end{equation}
where $h(\bar{\mathbf{x}}_{b,l}^{\text{out}},\bar{\mathbf{x}}_{e_l(b),l}^{\text{out}})$ denotes the distance function $d_{\mathrm{R}_{-}}
(\mathbf{1}_{b,l} \bar{\mathbf{x}}_{b,l}^{\text{out}} + \delta_{b,l} \bar{\mathbf{x}}_{e_l(b),l}^{\text{out}}-c_l)$, and $\Theta_1\triangleq \frac{2}{\gamma}\|\mathbf{\theta}^{\star}\| ^2-\frac{\gamma}{2}   \|B\bar{\mathbf{x}}^{0} \|^2
+ \sum_{b\in\mathcal{B}} \sum_{l\in\mathcal{L}_b} (\sum_{i\in\mathcal{I}_{b,l}^{\text{out}}}  \frac{1}{2\tau_{i,b,l}}(x_{i,b,l}^{{\text{out}},\star}-x_{i,b,l}^{{\text{out}},0})^2+ \frac{1}{2\kappa_{b,l}} (\mathbf{\lambda}^{\star}_{b,l})^2 ) + \sum_{s\in\mathcal{S}} \frac{1}{2\tau_{s}}(\| \mathbf{m}_{s}^{\star}-\mathbf{m}_{s}^0 \|^2+(r_{s}^{\star}-r_{s}^0  )^2+ \sum_{l\in \mathcal{L}_{s}}(x_{s,l}^{{\text{out}},\star}-x_{s,l}^{{\text{out}},0})^2 )$.
\end{proof}

\section*{REFERENCES}


\begin{thebibliography}{99}


\bibitem{Kelly_JORS_1998} F. P. Frank, A. K. Maulloo and D. K. H. Tan,  Rate control for communication networks: shadow prices, proportional fairness and stability, Journal of the Operational Research society, Springer, vol. 49, no. 3, pp. 237--252, 1998.

\bibitem{lagoa_2004_N} C.~M.~Lagoa, H.~Che and B.~A Movsichoff, Adaptive control algorithm for dencentralized optimal traffic engineering in the Internet, IEEE/ACM Transactions on  Networking, vol. 12, no. 3, pp. 415--428, June 2004.


\bibitem{lagoa_hop} B.~A.~Movsichoff, A.~Bernardo, C.~M.~Lagoa and H.~Che, Decentralized optimal traffic engineering in connectionless networks, IEEE Journal on Selected Areas in Communications, vol. 23, no. 2, pp. 293--303, 2005.



\bibitem{beck2013optimal}A.~Beck, A.~ Nedic, A.~Ozdaglar and M.~Teboulle, Optimal distributed gradient methods for network resource allocation problems, Submitted for publication, 2013.

\bibitem{nekouei} E.~Nekouei, G.~Nair and T.~ Alpcan, Convergence analysis of quantized primal-dual algorithm in quadratic network utility maximization problems, IEEE Conference on Decision and Control {CDC}, pp. 2655--2660, 2015.


\bibitem{Yin_2015_SC} X.~Q.~Yin, A.~Jindal, V.~Sekar and B.~Sinopoli, A control-theoretic approach for dynamic adaptive video streaming over HTTP, SIGCOMM, London, United Kingdom, pp. 325--338, August 2015.




\bibitem{Fazel_CDC_2005} M. Fazel and M. Chiang, Network utility maximization with nonconcave utilities using sum-of-squares method, Proceedings of the 44th IEEE Conference on Decision and Control, 2005, pp. 1867--1874.





\bibitem{Hande_Net_2007} P. Hande, S. Y. Zhang and M. Chiang, Distributed rate allocation for inelastic flows, IEEE/ACM Transactions on Networking, vol. 15, no. 6, pp. 1240--1253, 2007.






\bibitem{lasserre} J.~B.~Lasserre, Global optimization with polynomials and the problem of moments, SIAM Journal on Optimization, vol. 11, no. 3, pp. 796--817, 2001.





\bibitem{laurent} M.~Laurent, Sums of squares, moment matrices and optimization over polynomials, Emerging applications of algebraic geometry, Springer New York, 2009: 157--270.






\bibitem{Aybat_arxiv_2016} N. S. Aybat and E. F. Hamedani, A primal-dual method for conic constrained distributed optimization problems, Advances in Neural Information Processing Systems 29, edited by D. D. Lee and M. Sugiyama and U. V. Luxburg and I. Guyon and R. Garnett, Curran Associates, Inc., pp. 5049--5057, 2016, http://papers.nips.cc/paper/6242-a-primal-dual-method-for-conic-constrained-distributed-optimization-problems.pd.

\bibitem{ozay} N.~Ozay, C.~M.~Lagoa and M.~Sznaier, Set membership identification of switched linear systems with known number of subsystems, Automatica, vol. 51, pp. 180--191, 2015.

\bibitem{krein} M.~G.~krein and A.~A.~Nudelman, The markov moment problem and extremal problems, volume 50 of translations of mathematical monographs, American Mathematical Society, Providence, Rhode Island, 1977.



\bibitem{Chambolle_mathe_2015} A. Chambolle and T. Pock, On the ergodic convergence rates of a first-order primal-dual algorithm, Mathematical Programming, Springer, 2015, pp. 1--35.


\bibitem{Golub:1996:MC:248979} G.~H.~Golub and C.~F.~Van Loan, Matrix Computations (3rd Ed.), Johns Hopkins University Press, Baltimore, MD, USA, 1996.



\end{thebibliography}
\end{document}